\numberwithin{equation}{section}
\theoremstyle{plain}
\newtheorem{theorem}{Theorem}[section]
\newtheorem{lemma}[theorem]{Lemma}
\newtheorem{proposition}[theorem]{Proposition}
\theoremstyle{definition}
\newtheorem{definition}[theorem]{Definition}
\newtheorem{assumption}[theorem]{Assumption}
\theoremstyle{remark}
\newtheorem{remark}[theorem]{Remark}
\newcommand{\bN}{\mathbb{N}}
\newcommand{\bR}{\mathbb{R}}
\renewcommand{\vec}[1]{\boldsymbol{#1}}
\def\XXint#1#2#3{{\setbox0=\hbox{$#1{#2#3}{\int}$}
		\vcenter{\hbox{$#2#3$}}\kern-.5\wd0}}
\newcommand{\p}{\partial}
\newcommand{\epsi}{\varepsilon}
\newcommand{\dist}{\operatorname{dist}}
\begin{document}

\title[]{Asymptotic expansions for harmonic functions at conical boundary points}

\author[D. Kriventsov]{Dennis Kriventsov}
\address[D. Kriventsov]{Department of Mathematics, Rutgers University, 110 Frelinghuysen Road, Piscataway, NJ 08854-8019, USA}
\email{dnk34@math.rutgers.edu}

\author[Z. Li]{Zongyuan Li}
\address[Z. Li]{Department of Mathematics, Rutgers University, 110 Frelinghuysen Road, Piscataway, NJ 08854-8019, USA}
\email{zongyuan.li@rutgers.edu}


\begin{abstract} We prove three theorems about the asymptotic behavior of solutions $u$ to the homogeneous Dirichlet problem for the Laplace equation at boundary points with tangent cones. First, under very mild hypotheses, we show that the doubling index of $u$ either has a unique finite limit, or goes to infinity; in other words, there is a well-defined order of vanishing. Second, under more quantitative hypotheses, we prove that if the order of vanishing of $u$ is finite at a boundary point $0$, then locally $u(x) = |x|^m \psi(x/|x|) + o(|x|^m)$, where $|x|^m \psi(x/|x|)$ is a homogeneous harmonic function on the tangent cone. Finally, we construct a convex domain in three dimensions where such an expansion fails at a boundary point, showing that some quantitative hypotheses are necessary in general. The assumptions in all of the results only involve regularity at a single point, and in particular are much weaker than what is necessary for unique continuation, monotonicity of Almgren's frequency, Carleman estimates, or other related techniques.
\end{abstract}

\maketitle

\section{Introduction}

Let $\Omega\subset \bR^d$ be an open set with $0\in\p\Omega$, and consider solutions to the Dirichlet problem for the Laplace equation:
\begin{equation}\label{eqn-230314-1018}
	\begin{cases}
		\Delta u = 0 \quad &\text{in}\,\,\Omega\cap B_2,\\
		u = 0 \quad &\text{on}\,\,\p\Omega\cap B_2.
	\end{cases}
\end{equation}
A basic question here is to understand the asymptotic behavior of $u$ near $0$. If instead $0 \in \Omega$ was an interior point, the asymptotic behavior is clear---as $u$ is analytic, locally it can be decomposed as a leading-order homogeneous harmonic function plus higher order terms. Similar expansion formulas hold in related contexts where analyticity is not available, including second-order or higher order elliptic equations with $C^\infty,$ Lipschitz, or H\"older coefficients; see \cite{MR75416, MR1305956, MR1466314}. 

Below, we will use the term \emph{asymptotic expansion} loosely for representations $u = v + w$, where $v$ has homogeneity $m$ (or more generally growth at least $g(|x|)$ for some nondecreasing $g\geq 0$) and $w = o(|x|^m)$ (or more generally $w = o(g(|x|))$ for the same modulus $g$), possibly with more quantitative control over $w$.

At boundary points (so $0 \in \p \Omega$), Felli and Ferrero proved in \cite{MR3109767} that if $\Omega$ is a $C^{1,\alpha}$ perturbation of a regular cone, then suitable rescalings of the solution (i.e. blow-ups) converge to a nontrivial homogeneous harmonic function on that cone; this gives an expansion of $u$ to leading order.
In \cite{MR4525136}, Kenig and Zhao show that when $\p\Omega \in C^{1,\text{Dini}}$, the expansion formula
$u(x) = P_N(x) + O(|x|^{N} \int_0^{|x|}\widetilde{\omega})$
holds, where $\widetilde{\omega}$ is a modified modulus of continuity for the unit normal and $P_N$ is a degree $N$ homogeneous harmonic polynomial on $\bR^d_+$ that vanishes on the boundary (choosing coordinates so that $\partial \mathbb{R}^d_+$ is tangent to $\p \Omega$ at $0$).

In these results, $\Omega$ is required to have certain smoothness in a neighborhood of $0$. Here, we aim to discuss rougher domains under only one-point conditions, i.e. conditions which do not imply any smoothness except possibly at the single point $0$ itself. It is clear that at least some assumptions are necessary at $0$ to have any hope for solutions to \eqref{eqn-230314-1018} to have asymptotic expansions. Indeed, assuming $\Omega$ is regular for the Dirichlet problem, consider a Green's function $G$ for $\Omega$ with some fixed pole: if $G(x) = |x|^m \psi(x/|x|) + o(|x|^m)$, then $\Omega = \{G(x) > 0\}$ is tangent to the cone $\Gamma = \{x : \psi(x/|x|) > 0\}$ at the origin. With this in mind, we begin with the following definitions.

\begin{definition} \label{def-230315-0126}
	Given an open cone $\Gamma$, the point $0 \in \p \Omega$ is called \emph{conical with cone} $\Gamma$ if, in Hausdorff distance,
	\[
		r^{-1}\dist(\p\Omega\cap B_r, \p \Gamma \cap B_r) \rightarrow 0.
	\]
	Here, a \emph{cone} $\Gamma$ is a set invariant under dilation, i.e. $r \Gamma = \Gamma$ for all $r > 0$. 
\end{definition}

For an open cone $\Gamma$, let
\begin{equation*}
	(0<) \lambda_{1,\Gamma} \leq \lambda_{2,\Gamma} \leq \cdots \lambda_{k,\Gamma} \leq \cdots
\end{equation*}
be the sequence of Dirichlet eigenvalues (counting multiplicity) of the Laplace-Beltrami operator on the cross-section $\Gamma \cap B_1$. Also let $\psi_{k,\Gamma}$ be the associated eigenfunction, and 
\begin{equation}\label{eqn-221018-0432}
	m_{k,\Gamma} = \frac{-(d-2) + \sqrt{(d-2)^2 + 4 \lambda_{k,\Gamma}}}{2} (>0)
\end{equation}
be the characteristic constant. Then all homogeneous harmonic function on $\Gamma$ vanishing on $\p \Gamma$ can be written as $|x|^{m_{k,\Gamma}} \psi_{\lambda_{k,\Gamma}}(x/|x|)$. Below, the dependence on $\Gamma$ will be omitted when there is no ambiguity. It is worth mentioning that in Definition \ref{def-230315-0126}, $\Gamma$ is allowed to be $\bR^d_+$ so in particular every boundary point of a $C^1$ domain is conical.

If one wants to find an asymptotic expansion for $u$ at $0$, the first step is to identify the homogeneity of the leading-order term. On cones, convex domains, or sufficiently regular perturbations of them, the Almgren frequency gives a way to read off this homogeneity (this will be discussed below), but in this more general configuration it is unclear that the frequency is even approximately monotone. Instead, taking zero extensions of solutions to \eqref{eqn-230314-1018} outside $\Omega$, define the doubling index
\begin{equation}\label{eqn-doublingindex}
	N_u(r) := \frac{\fint_{\p B_r} |u|^2}{\fint_{\p B_{r/4}} |u|^2}.
\end{equation}
We say that $u$ is \emph{asymptotically homogeneous} if the limit of $N_u(r)$ at $r = 0$ exists in the extended real number sense ($\lim_{r \searrow 0} N_u(r) \in [0, \infty]$). The doubling index is a rough measure of homogeneity, and the existence of this limit means that there is a unique leading-order homogeneity for $u$ in this rough sense. Our first theorem states that $u$ is asymptotically homogeneous if $0$ is a conical point, under a smoothness assumption on $\Gamma$:
\begin{assumption} \label{ass-230707-1023}
	The open cone $\Gamma$ is graphical, in the sense that $\Gamma = \{(x', x_n) : x_n > g(x') \}$ for some choice of coordinates and function $g : \mathbb{R}^{d-1}\rightarrow \mathbb{R}$, and $g$ is Lipschitz.
\end{assumption}

\begin{theorem} \label{thm-230225-1127}
	If $0\in\p\Omega$ is conical with cone $\Gamma$ satisfying Assumption \ref{ass-230707-1023}, then $u$ is asymptotically homogeneous and, moreover,
	\begin{equation*}
		\lim_{r\searrow 0} N_u(r) = 4^{2m} \,\, \text{for some} \,\, m \in \{m_{\lambda_{k,\Gamma}}\}_{k=1}^\infty\cup \{ +\infty \},
	\end{equation*}
where $m_{\lambda_{k,\Gamma}}$ are the characteristic constants of $\Gamma$ defined in \eqref{eqn-221018-0432}.
\end{theorem}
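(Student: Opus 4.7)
The plan is to combine a blow-up analysis on the tangent cone $\Gamma$ with an approximate monotonicity statement for $N_u$ that rules out oscillation.

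First I would set up the blow-up: let $M(r) = (\fint_{\p B_r}u^2)^{1/2}$ and $v_r(x) = u(rx)/M(r)$, so that $v_r$ is harmonic on $\Omega_r := r^{-1}\Omega$, vanishes on $\p\Omega_r$, has $\fint_{\p B_1}v_r^2 = 1$, and satisfies the key scaling $N_{v_r}(s) = N_u(rs)$. The conical hypothesis gives $\Omega_r \to \Gamma$ in Hausdorff distance on each compact set; combined with the Lipschitz graph structure of $\Gamma$ (Assumption~\ref{ass-230707-1023}), this yields uniform positive density of $\Omega_r^c$ near $\p\Gamma$, so that standard boundary De Giorgi--Nash--Moser estimates together with Caccioppoli give uniform local $H^1$ and $C^\alpha$ bounds on $\{v_r\}$. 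Along any $r_j\to 0$, one then extracts a subsequence with $v_{r_j}\to v_0$ weakly in $H^1_{\mathrm{loc}}$ and strongly in $L^2$ on every sphere (via trace compactness), where $v_0$ is harmonic on $\Gamma$, vanishes on $\p\Gamma$, and satisfies $\fint_{\p B_1}v_0^2 = 1$.

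On the Lipschitz cone $\Gamma$, the function $v_0$ admits an expansion in Dirichlet spherical harmonics,
\begin{equation*}
v_0(x) = \sum_k c_k |x|^{m_k}\psi_k(x/|x|), \qquad \fint_{\p B_s}v_0^2 \propto \sum_k c_k^2 s^{2m_k},
\end{equation*}
so $N_{v_0}(s) = \bigl(\sum_k c_k^2 s^{2m_k}\bigr)/\bigl(\sum_k c_k^2 (s/4)^{2m_k}\bigr)$, and a short computation (equivalent to Almgren monotonicity on the cone) shows that $s\mapsto N_{v_0}(s)$ is non-decreasing, with $\lim_{s\to 0^+}N_{v_0}(s) = 4^{2m_{k_0}}$ where $k_0 = \min\{k : c_k\neq 0\}$.

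The crux is to transfer this monotonicity back to $u$: I would prove that for every $\varepsilon > 0$ there is $r_0$ with $N_u(r/4) \leq N_u(r)+\varepsilon$ for all $r\leq r_0$. Arguing by contradiction, if $r_j\to 0$ satisfies $N_u(r_j/4) > N_u(r_j)+\varepsilon$, extract a blow-up limit $v_0$ along a subsequence with strong $L^2$ convergence on $\p B_1$, $\p B_{1/4}$, and $\p B_{1/16}$; then $N_{v_{r_j}}(1)\to N_{v_0}(1)$ and $N_{v_{r_j}}(1/4)\to N_{v_0}(1/4)$, yielding $N_{v_0}(1/4) \geq N_{v_0}(1)+\varepsilon$ and contradicting the monotonicity above. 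This one-sided monotonicity forces $L := \lim_{r\to 0}N_u(r)$ to exist in $[0,\infty]$. When $L<\infty$, apply the diagonal identity $N_u(r_j s) = N_{v_{r_j}}(s) \to N_{v_0}(s) \to 4^{2m_{k_0}}$, letting $j\to\infty$ and then $s\to 0$, to identify $L = 4^{2m_{k_0}}\in\{4^{2m_k}\}_k$; when $L = +\infty$ we are done.

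The principal obstacle is the compactness/strong-convergence step, which supports both the extraction of $v_0$ and the trace convergence in the contradiction argument. Since only a one-point hypothesis is imposed, no regularity of $\p\Omega$ is available at positive scales, and the whole machinery must run on the ``moving'' rescaled domains $\Omega_r$ using only regularity inherited from their Hausdorff limit $\Gamma$. Assumption~\ref{ass-230707-1023} enters here essentially: it is precisely the Lipschitz graph structure of $\Gamma$ that delivers uniform-in-$r$ boundary regularity for $v_r$, and without it neither the $H^1_{\mathrm{loc}}$ bounds nor the strong $L^2$ trace convergence on $\p B_s$ can be taken for granted.
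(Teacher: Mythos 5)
Your overall strategy---blow up to the tangent cone, exploit the monotonicity and rigidity of the frequency there, and argue by contradiction---is the same as the paper's. However, the specific monotonicity statement you try to transfer is too weak to conclude, and the compactness step has a degeneracy you have not ruled out.

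The central gap is in the step ``this one-sided monotonicity forces $L := \lim_{r\to 0}N_u(r)$ to exist.'' Knowing that for every $\varepsilon>0$ there is $r_0(\varepsilon)$ with $N_u(r/4)\le N_u(r)+\varepsilon$ for $r\le r_0(\varepsilon)$ only says that the forward increments of $N_u$ along dyadic scales are eventually small; it does not prevent slow oscillation. For instance a function of the form $1+\sin\bigl(\sqrt{\log_4(1/r)}\bigr)$ satisfies the same conclusion yet has $\liminf\neq\limsup$. To close this, the paper does not prove an increment estimate but a \emph{barrier} (no-crossing) lemma (Lemma \ref{lem-230226-1016}): for any $\mu\notin\{m_j\}$ there is $r_0$ so that $N_u(r)\le 4^{2\mu}$ and $r<r_0$ implies $N_u(\tau r)\le 4^{2\mu}$ for all $\tau\in[1/16,1/4]$. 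This is proved by the same blow-up contradiction you set up, but the rigidity of Lemma \ref{lem-230225-1124} then forces $N_{v_0}\equiv 16^m$ with $m\in\{m_j\}$, hence $\mu=m$---and it is precisely the hypothesis $\mu\notin\{m_j\}$ that makes this a contradiction. In other words, the quantization of the rigidity constants (which your argument only invokes at the very end, after the limit supposedly exists) has to enter already here, at the level of the thresholds $4^{2\mu}$. Once the barrier lemma is in hand, Step 3 of the paper pins $\limsup N_u$ between $4^{2(m+\varepsilon_k)}$ for a sequence $m+\varepsilon_k\notin\{m_j\}$ converging to $m=\tfrac12\log_4\liminf N_u$, and this is what yields existence of the limit; you cannot get there from the increment estimate alone.

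There is also a compactness issue you have not addressed: if $N_u(r_j)\to\infty$ along the contradicting sequence, then $\fint_{\p B_{1/4}}v_{r_j}^2 = 1/N_u(r_j)\to 0$, so the limit $v_0$ vanishes on $\p B_{1/4}$ and hence (by the spherical-harmonic expansion on $\Gamma$) vanishes identically; the normalization $\fint_{\p B_1}v_{r_j}^2=1$ is lost under weak convergence and no contradiction arises. Your normalization and subharmonicity give an $L^2(B_1)$ bound, but no $H^1$ control up to $\p B_1$, so the asserted strong trace convergence on $\p B_1$ is not available. The paper avoids this by first proving in Step 1, via the more robust full-ball quantity $\widetilde N_u$ and Lemma \ref{lem-230624-1045}, that $\liminf N_u<\infty$ already forces $\limsup N_u\le C(\liminf N_u)^4$; this a priori bound then supplies the missing $L^2$ control at larger radii in Step 2, ensuring the blow-up limit is nondegenerate. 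Your proposal needs an analogue of that preliminary boundedness step, and needs the increment estimate replaced by the level-set barrier, before the conclusion follows.
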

Note that Theorem \ref{thm-230225-1127} does not exclude the possibility that $u$ vanishes to infinite order near $0$, i.e., strong unique continuation property (SUCP) fails. Assumption \ref{ass-230707-1023} can be considerably relaxed, for example to a uniform Lebesgue density condition on $\Gamma^c$, but we do not attempt maximal generality here: in fact, this theorem is interesting even when $\Gamma = \bR^d_+$.

Theorem \ref{thm-230225-1127} suggests that one might consider Almgren blow-ups of $u$, the rescaled functions
\begin{equation} \label{eqn-230627-0304}
	u_r(y) = \frac{u(r y)}{(\fint_{\p B_r \cap \Omega} |u|^2)^{1/2}},
\end{equation}
to attempt to find the leading order term in an asymptotic expansion for $u$, even when the Almgren frequency is unavailable. Indeed, the boundedness of $N_u(r)$ is enough to guarantee the compactness of $\{u_r\}_{r\in (0,1)}$ in $L^2$. Hence, along subsequences $r_k \rightarrow 0$, $u_r$ converges. Moreover, $\lim_{r\searrow 0}N_u(r) = 4^{2m} < \infty$ guarantees that the blow-up limit has to be a homogeneous harmonic function with homogeneity $m$. When $d=2$, one can further obtain the uniqueness of the blow-up limit simply due to the fact that the eigenvalues $\lambda_{k, \Gamma}$ are all simple, giving an asymptotic expansion. This approach is discussed Section \ref{sec-221223-0602}. 

When $d\geq 3$, however, it turns out we require additional assumptions. First, the following essentially says that $\p \Omega$ is $C^{1, \alpha}$ at $0$ only, but with an arbitrary tangent object:
\begin{definition}[$\alpha$-conical]
	Given an open cone $\Gamma$ we say $0$ is \emph{$\alpha$-conical with cone $\Gamma$} if there exists $\alpha > 0$ such that
	\begin{equation}\label{eqn-220728-0546}
		\limsup_{r\rightarrow 0} r^{-(1+\alpha)} \dist(\p\Omega\cap B_r, \p\Gamma\cap B_r) <\infty.
	\end{equation}
\end{definition}
We will also need to assume some smoothness of the limit cone $\Gamma$.
\begin{assumption} \label{ass-230119-0339}
	The open cone $\Gamma$ is graphical in the sense of Assumption \ref{ass-230707-1023} with the graph $g$ being either $C^{1, \text{Dini}}$ or semiconvex.
\end{assumption}
Recall that a function $g$ is called semiconvex, if there exists a constant $C>0$, such that locally $g(x + y) - 2g(x) + g(x-y) \geq - C |y|^2$.
\begin{theorem} \label{thm-221218-1120} 
	Let $\Omega \subset \bR^d$ with $0\in\p\Omega$ and $d\geq 3$. If $0$ is $\alpha$-conical with cone $\Gamma$ satisfying Assumption \ref{ass-230119-0339}, then for any nontrivial solution $u$ to \eqref{eqn-230314-1018}, either $\lim_{r\rightarrow 0} N_r(u) = +\infty$ or there exists some $N \in \mathbb{N}$, $C \neq 0$, and $\alpha_0 \in (0,1)$ such that
	\begin{equation} \label{eqn-221228-0956}
		u(x) = C|x|^{m_{N,\Gamma}} \psi_{\lambda_{N,\Gamma}}(x / |x|) + w(x), \quad \text{where} \,\, |w(x)| \leq C |x|^{ m_{N, \Gamma} + \alpha_0 \alpha }.
	\end{equation}
\end{theorem}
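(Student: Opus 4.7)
By Theorem \ref{thm-230225-1127}, either $\lim_{r\to 0} N_u(r) = +\infty$ (yielding the first alternative in the statement), or this limit equals $4^{2m_{N,\Gamma}}$ for some $N\in\bN$; assume the latter, and set $m := m_{N,\Gamma}$, with $E_N$ the corresponding Dirichlet eigenspace of the Laplace--Beltrami operator on $\Gamma\cap\p B_1$. The task is to upgrade the $L^2$-compactness of the Almgren blow-ups $u_r$ from \eqref{eqn-230627-0304} into a unique limit with a polynomial convergence rate. My plan has three parts: (i) an almost-monotonicity formula for an Almgren-type frequency with a quantitative error coming from the $\alpha$-conical assumption and Assumption \ref{ass-230119-0339}; (ii) polynomial decay of the projection of $u|_{\p B_r}$ onto spherical eigenfunctions outside $E_N$; (iii) a \L{}ojasiewicz--Simon type argument selecting a unique limit inside $E_N$.

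For (i), using $\alpha$-conicality together with Assumption \ref{ass-230119-0339} I would construct a bilipschitz map defined near $0$ that is tangent to the identity at $0$ to order $1+\alpha$ and sends $\p\Omega$ to $\p\Gamma$. Pulling $u$ back gives $\tilde u$ on $\Gamma$ solving $-\Delta \tilde u = \dv(A\nabla \tilde u)$ for a matrix field with $|A(x)|\leq C|x|^\alpha$. Setting $H(r) = \int_{\p B_r\cap\Gamma} \tilde u^2$, $D(r) = \int_{B_r\cap\Gamma}|\nabla\tilde u|^2$, and $\phi(r) = rD(r)/H(r)$, a Rellich identity (multiply by $x\cdot\nabla\tilde u$ and integrate) gives monotonicity of $\phi$ on the exact cone $\Gamma$, with the key sign coming from $x\cdot\nu_\Gamma\geq 0$ on $\p\Gamma$. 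In the Dini case the boundary integral is controlled via the Dini modulus of the unit normal to $\p\Gamma$, and in the semiconvex case by a one-sided inequality from supporting hyperplanes together with subharmonicity of the zero extension of $\tilde u$ across $\p\Gamma$. Absorbing the perturbation $A$ yields $\phi(r) + Cr^{\alpha_0}$ nondecreasing for some $\alpha_0\in(0,1)$, and hence $|\phi(r) - m|\leq Cr^{\alpha_0}$.

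For (ii), integrating $(\log H)'(r) = (2\phi(r) + d-1)/r$ up to controlled error gives $H(r) = r^{2m}(h_\infty + O(r^{\alpha_0\alpha}))$ with $h_\infty > 0$. Expanding $\tilde u|_{\p B_r}$ in the eigenbasis $\{\psi_{k,\Gamma}\}$ of the cross-section, the coefficients $c_k(r)$ satisfy ODEs that are exactly decoupled on $\Gamma$ and coupled with errors of size $r^\alpha$ in our setting; a Gronwall argument shows the contribution from eigenfunctions with $m_{k,\Gamma}\neq m$ is $o(r^m)$, at rate $r^{m+\alpha_0\alpha}$. For (iii), writing the $L^2$-projection $P(r)\in E_N$ as $P(r)=\sum_j \beta_j(r)\psi_{N,j}$ on an orthonormal basis and rescaling to $\tilde\beta_j(r) := \beta_j(r)/r^m$, the $\tilde\beta_j$ are constant on $\Gamma$; on the perturbed domain the coupling forces $|\tilde\beta'(r)| \leq Cr^{-1}(\|\tilde u_r|_{\p B_1} - P(r)\|_{L^2} + r^{\alpha_0\alpha}) \leq Cr^{\alpha_0\alpha - 1}$, which is integrable in $dr$. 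Hence $\tilde\beta(r) \to \tilde\beta_\infty$ at rate $r^{\alpha_0\alpha}$, and $\psi := \sum_j \tilde\beta_{\infty,j}\psi_{N,j}$ gives the leading-order term in \eqref{eqn-221228-0956}. Passing from the $L^2(\p B_r)$ bound to the pointwise bound on $w$ uses interior elliptic regularity on balls of radius $\sim|x|$, applicable because $\Omega$ is a small Lipschitz perturbation of $\Gamma$ at those scales. Finally, $C\neq 0$ is inherited from $h_\infty > 0$.

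The main obstacle I anticipate is the semiconvex case in Step (i), where $\p\Gamma$ may fail to be $C^1$ and the classical pointwise Rellich identity is unavailable; it must be replaced by a variational version exploiting the one-sided bound from semiconvexity together with the subharmonicity of the zero extension. A more conceptual obstruction lies in Step (iii): without the quantitative $r^{\alpha_0\alpha}$ rate from Step (ii), the projection $P(r)$ could spiral inside $E_N$ without limit (as the three-dimensional counterexample mentioned in the abstract confirms is genuinely possible without quantitative hypotheses), so the rate derived from the frequency inequality is precisely what converts compactness into uniqueness.
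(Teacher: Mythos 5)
Your approach is genuinely different from the paper's, but it has a fundamental gap in Step (i) that undermines the whole strategy.

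Step (i) proposes to construct a bilipschitz map near $0$, tangent to the identity to order $1+\alpha$, that sends $\p\Omega$ to $\p\Gamma$, so that the pulled-back equation has a scaling-subcritical perturbation $A$ and an Almgren-type almost-monotonicity formula can be established. But the only hypothesis on $\Omega$ is the $\alpha$-conical condition \eqref{eqn-220728-0546}, which is a one-point condition: it bounds the \emph{Hausdorff distance} between $\p\Omega\cap B_r$ and $\p\Gamma\cap B_r$ as $r\to 0$. It does not assert that $\p\Omega$ is graphical, Lipschitz, or even a topological manifold in any neighborhood of $0$; $\p\Omega$ may be as rough as you like away from the origin provided it stays in a thin $r^{1+\alpha}$-neighborhood of $\p\Gamma$. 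In such a situation no bilipschitz map from $\p\Omega$ onto $\p\Gamma$ exists, and no almost-monotonicity of the frequency is available. Indeed, the introduction of the paper explicitly emphasizes that its assumptions are ``much weaker than what is necessary for $\dots$ monotonicity of Almgren's frequency,'' precisely to signal that this route is closed. Once Step (i) fails, the quantitative decay in Step (ii) and the integrability of $|\tilde\beta'(r)|$ in Step (iii) — which is exactly the input your ``\L{}ojasiewicz--Simon'' argument needs — have no source.

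The paper circumvents this by never imposing a coordinate system on $\Omega$ itself. After choosing a $C^{1,\alpha}$ change of variables $\Phi$ that maps a perturbed cone $\mathcal{C}\subset\Omega$ (whose boundary lies strictly \emph{below} $\p\Omega$, not on it) onto the exact cone $\Gamma$, the solution is represented on $\Gamma$ via layer potentials against the kernels $K_i, k$ built from the Green's function of $\Gamma$. The only input about $\Omega$ is the De Giorgi estimate of Lemma \ref{lem-230119-0733}, which shows $|u|$ is small of order $r^{\mu+\alpha\alpha_0}$ in the thin shell $\Omega\setminus\mathcal{C}$; this uses only the Lebesgue-density of $\Omega^c$ at the relevant scales, with no structural regularity of $\p\Omega$. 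The homogeneous expansion and the quantitative remainder then come out of the eigenfunction expansion of the kernels (Section \ref{sec-230703-0347}), not from a frequency inequality, and uniqueness of the leading term is automatic because the expansion has only finitely many pieces below the next characteristic exponent. If you want to keep a frequency-based argument, you would need to strengthen the hypothesis to genuine $C^{1,\alpha}$ regularity of $\p\Omega$ in a neighborhood, but then you would be proving a strictly weaker theorem than the one stated.
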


It is easy to see that \eqref{eqn-221228-0956} implies $(\fint_{\p B_r \cap \Omega} |u|^2)^{1/2} \approx r^{m_{N,\Gamma}}$ and $u_r \rightarrow C |x|^{m_{N,\Gamma}} \psi_{N,\Gamma}$ in $L^2$.

\begin{remark}
	For $\p\Omega \in C^{1,Dini}$ or convex, it is known that (SUCP) holds, any non-trivial solution must vanish to at most finite order, and in particular $N_r(u)$ is bounded. On the other hand, the assumptions in Theorem \ref{thm-221218-1120} are weaker than any known criterion for (SUCP) even if the cone $\Gamma$ is a half-space, as far as we are aware.
\end{remark}

\begin{remark} 
	We expect similar results in Theorem \ref{thm-221218-1120} hold for operators with scaling subcritical coefficients and lower order terms, i.e.,
	\begin{equation*} 
		Lu = D_i(a_{ij}D_j u + \widetilde{W}_i u) + W_i D_i u + Vu
	\end{equation*}
	with $a_{ij} \in C^\epsi$, $\widetilde{W}_i, W_i \in L^{d+\epsi}_{loc}$ and $V \in L^{d/2 + \epsi}_{loc}$. See \cite{MR1466314} for an interior version which works for higher order elliptic equations with subcritical lower order terms.
\end{remark}

One may naturally ask whether the extra convergence rate condition in \eqref{eqn-220728-0546} is necessary. We construct a \emph{convex} domain $\Omega \subset \mathbb{R}^3$ for which $0 \in \p\Omega$ is conical with cone $\mathbb{R}^{3}_+$, but for which no expansion \eqref{eqn-221228-0956} exists for some $u$:
\begin{theorem} \label{thm-230315-0259}
	There exists a convex domain $\Omega \subset \bR^3$ with $0\in\p\Omega$ being conical with tangent cone $\bR^3_+ = \{(x,y,z): z>0\}$,
	a solution $u$ to \eqref{eqn-230314-1018}, and a sequence $r_k \rightarrow 0$, such that
	\begin{equation*}
		\frac{ u( r_{2k+1} \cdot ) }{ (\fint_{\p B_{r_{2k+1}}} |u|^2)^{1/2} } 
		\rightarrow
		4\sqrt{2/\pi} xz, \quad \frac{ u(r_{2k}\cdot) }{ (\fint_{\p B_{r_{2k}}} |u|^2)^{1/2} } 
		\rightarrow 
		4\sqrt{2/\pi} yz.
	\end{equation*}
\end{theorem}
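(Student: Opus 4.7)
Proof plan. The construction pairs a convex rough boundary with a solution whose leading-order ``angular coordinate'' in the $2$-dimensional degree-$2$ Dirichlet eigenspace on $S^2_+$---spanned by $xz$ and $yz$---oscillates as $r \to 0$. I take $\Omega = \{(x,y,z) : z > g(x,y)\}$ with $g$ convex, $g(0) = 0 = \nabla g(0)$, and $g = o(r)$ but $g \neq O(r^{1+\alpha})$ for any $\alpha > 0$. A natural base is
\[
g_0(x,y) = |(x,y)|/\log(1/|(x,y)|),
\]
which is convex and radial, making $0$ conical with cone $\bR^3_+$ but not $\alpha$-conical, so the hypotheses of Theorem~\ref{thm-221218-1120} fail. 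To break the rotational symmetry of $g_0$ and induce oscillation in the tangential direction, I would superimpose an angular perturbation of the form
\[
g_1(x,y) = \sum_k c_k \eta_k(|(x,y)|) x_{\sigma(k)}^2,
\]
where $\sigma(k) \in \{1,2\}$ alternates, $\eta_k$ is a convex radial cut-off supported near the scale $r_k = \lambda^{-k}$, and the amplitudes $c_k$ are tuned against the Hessian lower bound of $g_0$ (of order $1/(r \log^2 r)$) so that $g_0 + g_1$ remains convex.

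Next, I would choose $u$ to solve \eqref{eqn-230314-1018} in $\Omega \cap B_2$ with boundary data $u_0 = xz + yz$ on $\p B_2 \cap \Omega$. Since $\Omega$ is symmetric under $(x,y) \mapsto -(x,y)$ and $u_0$ is odd under this reflection, the solution $u$ inherits this parity, so the $z$-mode of its expansion at $0$ vanishes and the leading behavior lies in $V := \mathrm{span}\{xz, yz\}$. Writing $u = u_0 + w$ with $w$ harmonic and $w|_{\p \Omega} = -(x+y)g$, I would expand the Poisson kernel of $\Omega \cap B_2$ near $0$ in the Dirichlet basis on the half-space and track the angular coordinate $\theta(r)$ of the $V$-projection of $u|_{\p B_r \cap \Omega}$. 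In the unperturbed half-space this angle is constant; in the perturbed domain, each bump in $g_1$ produces a rotation of $\theta(r)$ concentrated near $r = r_k$, aligned with the axis $x_{\sigma(k)} z$. With $c_k$ and $\lambda$ tuned so that each pulse moves $\theta$ into the corresponding coordinate direction, one arranges $\theta(r_{2k+1}) = 0$ (so the $V$-projection points along $xz$) and $\theta(r_{2k}) = \pi/2$ (along $yz$). The explicit constant $4\sqrt{2/\pi}$ then arises from the $\fint_{\p B_r}$ normalization applied to the degree-$2$ Dirichlet eigenfunctions on $S^2_+$.

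The crux of the difficulty is quantitative: the amplitudes $c_k$ must be small enough to preserve convexity and the tangent cone, yet large enough that each bump's contribution to $\theta(r)$ is not overwhelmed by the cumulative effect of neighbouring scales. This tension governs the construction and forces a careful harmonic analysis of the correction $w$, in which one computes the moments of $g_1$ against the degree-$2$ Poisson kernels of the half-space scale-by-scale. The mechanism is fully consistent with Theorem~\ref{thm-221218-1120}: in the $\alpha$-conical regime, the polynomial convergence rate would force $\theta(r)$ to stabilize, ruling out oscillation. In the non-$\alpha$-conical regime constructed here, the sub-polynomial decay of $g_0$ is precisely what allows the bumps' cumulative effect to produce genuine oscillation, which is exactly the sharpness statement claimed.
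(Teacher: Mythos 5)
Your plan captures the right intuition---a convex domain that squeezes alternately in $x$ and $y$ as $r\to 0$, forcing the degree-$2$ angular projection to rotate---but as written it is a sketch of a strategy, not a proof, and the step that actually carries the theorem is left open. You acknowledge the crux yourself: one must show that the bumps' contributions to $\theta(r)$ are \emph{not} swamped by the cumulative effect of other scales. That is the entire content of the statement, and nothing in your plan gives a mechanism to control it. In particular, expanding the Poisson kernel of $\Omega\cap B_2$ around the half-space and ``tracking $\theta(r)$'' is not legitimate in the non-$\alpha$-conical regime you need: the domain is not a summable perturbation of $\bR^3_+$, so that expansion has no smallness parameter, and the corrections at each scale are not controlled. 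Moreover, a single convex bump $c_k\eta_k(|(x,y)|)\,x_{\sigma(k)}^2$ localized near $r_k$ with $c_k\lesssim 1/(r_k\log^2 r_k)$ (as forced by convexity of $g_0+g_1$) is a $O(1/\log r_k)$ relative perturbation of $g_0$; there is no argument given that this is large enough, or even of the right sign, to move the projection angle by the required $\pi/2$, nor that the total rotation does not saturate.

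The paper closes this gap by a structurally different construction that trades heuristics for two exact tools. It builds $\Omega=\bigcap_k (O_k\Gamma_k-\delta_k e_z)$ as an intersection of shifted \emph{elliptic cones} with openings $\alpha_k,\beta_k\uparrow\pi/2$ and $O_k$ alternating a $90^\circ$ rotation, and (i) uses eigenvalue perturbation theory (Hadamard variation, Section 5.2) to prove the spectral splitting $\lambda_{x,k}<\lambda_{y,k}$ of the bisected cross-sections, yielding a strict gap $m_{x,k}<m_{y,k}$ between the decay rates of the $xz$-like and $yz$-like modes on each $\Gamma_k$; and (ii) uses Almgren monotonicity \emph{centered at the apex $-\delta_k e_z$} of the convex cone $\widehat\Omega_k$ (a trick unavailable for your radial-plus-bumps graph) to get clean two-sided power bounds on $\fint_{\p B_r}|u_i|^2$, which then feed into a comparison-principle sandwich $|u_1|\geq|u_1^{(2k+1)}|$, $|u_2|\leq|u_2^{(2k+1)}|$ between solutions on nested auxiliary domains. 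With these, $\delta_k$ can be chosen inductively so that the ratio $\fint_{\p B_{\delta_k}}|u_1|^2 / \fint_{\p B_{\delta_k}}|u_2|^2$ oscillates between $>k$ and $<1/k$. Your proposal has neither the spectral splitting input nor a monotonicity formula adapted to the geometry, so there is no way to run the inductive choice of scales; the theorem does not follow from what you have written.
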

The point here is that $r^{-1} (\Omega \cap B_r) \rightarrow B_1^+$ slowly, with cross-sections $\p B_r \cap \Omega$ resembling ellipses with oscillating eccentricity. Then a suitably chosen $u$ can be made to have traces $u|_{\p B_r}$ ``rotate" between two second eigenfunctions of the Lapacian on $\p B_r \cap \mathbb{R}^3_+$.

Note that when $\Omega$ is convex, every point $x_0 \in \p\Omega$ is conical. Indeed, $(\Omega - x_0)/r$ always converges \emph{monotonically} to a cone $\Gamma_{x_0}$. Moreover, Almgren's frequency is monotone on convex domains (\cite{MR1363203}), so
	\begin{equation*} 
		N_{u}(r) \searrow N_u(0) = 4^{2m_{N, \Gamma_{x_0}}} \,\, \text{for some} \,\, N \in \bN, \,\, \text{as} \,\, r\searrow 0.
	\end{equation*}
So the conclusion of Theorem \ref{thm-230225-1127} for convex $\Omega$ holds, and in a stronger form: $\lim_{r \searrow 0} N_u(r) < \infty$ and (SUCP) is valid. Therefore the example of Theorem \ref{thm-230315-0259} shows that to have an asymptotic expansion in the weakest possible sense (uniqueness of limits for the Almgren rescalngs $u_r$), it is not sufficient to have monotonicity of the frequency, or (SUCP), or even monotonicity in the convergence of $\Omega/r$ to its tangent cone; some sufficiently summable rate of convergence appears to be needed. 

Similarly, it follows that the Dini condition in \cite{MR4525136} cannot be replaced by even very strong geometric assumptions like convexity. In the recent work \cite{KZ2}, counterexamples are constructed of barely non-$C^{1, \text{Dini}}$ domains admitting solutions to the Dirichlet problem with large singular sets, but in those examples $u$ still has unique Almgren blow-ups.

It is worth emphasizing that in both Theorems \ref{thm-221218-1120} and \ref{thm-230225-1127}, we only assume one-point conditions at $0$. Compared to earlier results in \cite{MR3109767, MR4525136}, we do not need any smoothness condition on $\p\Omega$ or its normal direction $\vec{n}$ in a neighborhood. We hope the methodology here could be useful when discussing asymptotic and unique continuation properties of harmonic functions on rough domains.

The paper is organized as follows. In Section \ref{sec-230703-0345}, we prove Theorem \ref{thm-230225-1127}. After collecting some preliminary facts about Green's functions on cones in Section \ref{sec-230703-0347}, we provide the proof of Theorem \ref{thm-221218-1120} in Section \ref{sec-230703-0348}. Finally in Section \ref{sec-221223-0602}, we discuss the uniqueness of Almgren blow-ups on $\Omega \subset \bR^2$, and construct the example in Theorem \ref{thm-230315-0259}.

\section{Asymptotic homogeneity at a conical point}\label{sec-230703-0345}

In this section, we prove Theorem \ref{thm-230225-1127}. The key idea is to combine a compactness argument motivated by \cite{MR3952693, 2022arXiv220313393L} and a rigidity result. Besides the usual doubling index $N_u(r)$ defined in \eqref{eqn-doublingindex}, the following version using averages over full balls rather than spheres will also be useful:
\begin{equation*}
	\widetilde{N}_u(r) := \frac{\fint_{B_r} |u|^2}{\fint_{B_{r/2}} |u|^2}.
\end{equation*}
If there is no ambiguity, we suppress the subscript: $N = N_u$, $\widetilde{N} = \widetilde{N}_u$. It is worth noting that if $u$ is harmonic, $u^2$ is subharmonic, and so from the mean value property both $N, \widetilde{N} \geq 1$. The following lemma shows that $N$ and $\widetilde{N}$ are comparable at adjacent scales.
	\begin{lemma} \label{lem-230624-1045}
		Let $v$ be a subharmonic function on $B_1 \subset \bR^d$, $d \geq 2$, with $v \not \equiv 0$ on any neighborhood of $0$. Then for some $C = C(d)$
		\begin{equation*}
			\widetilde{N}_v(r) \leq C N_v (r), \quad N_v (s) \leq C \Pi_{j=0}^3 \widetilde{N}_v(2^{1-j}r),\quad \forall r \in (0,1/2) \,\, \text{and} \,\, s\in (r/2,r).
		\end{equation*}
	\end{lemma}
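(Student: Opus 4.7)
My plan is to reduce both inequalities to elementary monotonicity estimates on the spherical average $f(\rho):=\fint_{\partial B_\rho} v$. For a nonnegative subharmonic $v$ (the case of interest, since in the main application $v=u^2$), the map $\rho\mapsto f(\rho)$ is monotone nondecreasing, and the polar identity $\fint_{B_R} v = \frac{d}{R^d}\int_0^R \rho^{d-1} f(\rho)\,d\rho$ immediately yields the two-sided sandwich
\begin{equation*}
	(1-2^{-d})\, f(R/2) \;\leq\; \fint_{B_R} v \;\leq\; f(R),\qquad R\in(0,1).
\end{equation*}
Both claims will follow by dyadic bookkeeping from this sandwich, so no compactness, PDE, or regularity input is needed beyond the subharmonicity of $v$.

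For the first inequality, I would apply the upper half of the sandwich at scale $r$ and the lower half at scale $r/2$ (so that $R/2=r/4$), producing
\begin{equation*}
	\widetilde N_v(r) \;=\; \frac{\fint_{B_r} v}{\fint_{B_{r/2}} v} \;\leq\; \frac{f(r)}{(1-2^{-d})\,f(r/4)} \;=\; \frac{1}{1-2^{-d}}\, N_v(r).
\end{equation*}

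For the second inequality, fix $s\in(r/2,r)$. Monotonicity of $f$ gives $f(s)\leq f(r)$ and $f(s/4)\geq f(r/8)$, while the sandwich at scales $R=2r$ and $R=r/8$ yields $f(r)\leq (1-2^{-d})^{-1}\fint_{B_{2r}}v$ and $f(r/8)\geq \fint_{B_{r/8}}v$. Combining these and telescoping the resulting ball-average ratio across the four consecutive dyadic radii $2r,\,r,\,r/2,\,r/4$ gives
\begin{equation*}
	N_v(s) \;=\; \frac{f(s)}{f(s/4)} \;\leq\; \frac{1}{1-2^{-d}}\cdot\frac{\fint_{B_{2r}} v}{\fint_{B_{r/8}} v} \;=\; \frac{1}{1-2^{-d}}\prod_{j=0}^{3}\widetilde N_v(2^{1-j}r),
\end{equation*}
which is the stated inequality with $C=(1-2^{-d})^{-1}$. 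There is no substantive obstacle here; the only subtle point is to pick the outer scales $2r$ and $r/8$ so that the telescoping comes out with exactly four factors, matching the statement.
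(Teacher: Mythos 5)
Your proof is correct and follows essentially the same strategy as the paper's: both rely on monotonicity of the spherical average of the nonnegative subharmonic quantity (the paper's $v^2$, in your reading $v$ itself after the substitution $v\mapsto u^2$), relate ball averages to spherical averages at adjacent scales, and telescope four dyadic ratios. The only cosmetic difference is that the paper passes through the annulus average over $B_{r/2}\setminus B_{r/4}$ to lower-bound the denominator, whereas you package the same monotonicity fact as the explicit sandwich $(1-2^{-d})\,f(R/2)\le \fint_{B_R} v \le f(R)$; the constant $(1-2^{-d})^{-1}$ and the telescoping across $2r,r,r/2,r/4$ match the paper exactly.
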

\begin{proof}
	For the first inequality, by definition
	\begin{equation} \label{eqn-230624-1039}
		\widetilde{N}_v(r) 
		= 
		\frac{\fint_{B_{r}} |v|^2}{\fint_{B_{r/2}} |v|^2} 
		\leq
		C \frac{\fint_{B_{r}} |v|^2}{\fint_{B_{r/2}\setminus B_{r/4}} |v|^2}.
	\end{equation}
	By the mean value property of $v^2$, which is subharmonic, we obtain
	\begin{equation*}
		\text{RHS of \eqref{eqn-230624-1039}}
		\leq
		C \frac{\fint_{\p B_{r}} |u|^2}{\fint_{\p B_{r/4}} |u|^2}
		=
		C N_u (r).
	\end{equation*}
	The second inequality can be proved similarly:
	\begin{equation*}
		N_v (s)
		=
		\frac{\fint_{\p B_{s}} |u|^2}{\fint_{\p B_{s/4}} |u|^2} 
		\leq
		C \frac{\fint_{B_{2r} \setminus B_r} |u|^2}{\fint_{B_{r/8}} |u|^2}
		\leq
		C \frac{\fint_{B_{2r}} |u|^2}{\fint_{B_{r/8}} |u|^2}
		=
		C \Pi_{j=0}^3 \widetilde{N}_v(2^{1-j}r).
	\end{equation*}
\end{proof}

The rigidity result is given as follows.
\begin{lemma} \label{lem-230225-1124}
	Let $\Gamma \subset \bR^d$ be a cone with vertex at $0$ satisfying Assumption \ref{ass-230707-1023}, $d \geq 2$, and $v$ be a non-trivial solution to 
	\begin{equation*} 
		\begin{cases}
			\Delta v = 0 \quad &\text{in}\,\,\Gamma\cap B_2,\\
			v = 0 \quad &\text{on}\,\,\p\Gamma\cap B_2.
		\end{cases}
	\end{equation*}
Then both $N_v$ and $\widetilde{N}_v$ are non-decreasing for $r \in (0,2)$. Moreover, if either $N_v(t) = N_v(s)$ or $\widetilde{N}_v(t) = \widetilde{N}_v(s)$ for some $t>s$, then $u$ is homogeneous of degree $m_{j,\Gamma}$ with a characteristic constant defined in \eqref{eqn-221018-0432}. In particular, $N_v \equiv 16^{m_{j,\Gamma}}$, $\widetilde{N}_v \equiv 4^{m_{j,\Gamma}}$.
\end{lemma}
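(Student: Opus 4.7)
The plan is to deduce both the monotonicity and the rigidity from Almgren's frequency function
\begin{equation*}
N(r) = \frac{r D(r)}{H(r)}, \qquad H(r) := \int_{\partial B_r \cap \Gamma} v^2, \quad D(r) := \int_{B_r \cap \Gamma} |\nabla v|^2.
\end{equation*}
The two classical facts I would invoke are Almgren's monotonicity on a Lipschitz cone, $N'(r) \geq 0$, and the corresponding rigidity, that $N \equiv m$ on a subinterval forces $v$ to be homogeneous of degree $m$ there. The derivation combines $H'(r) = \frac{d-1}{r} H(r) + 2 D(r)$ (from $v|_{\partial \Gamma} = 0$ and $\Delta v = 0$) with the Rellich--Pohozaev identity $D'(r) = \frac{d-2}{r} D(r) + 2 \int_{\partial B_r \cap \Gamma} (\partial_\nu v)^2$. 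The cone structure enters crucially in the latter: because $\partial \Gamma$ is ruled by rays through the origin, $x \cdot \nu(x) = 0$ for a.e.\ $x \in \partial \Gamma$, killing the boundary term on $\partial \Gamma$ that would otherwise arise. Assumption~\ref{ass-230707-1023} provides the Lipschitz regularity needed to justify these integrations by parts.

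The next step is to translate monotonicity of $N$ into monotonicity of $N_v$ and $\widetilde{N}_v$. From $H'/H = (d-1+2N)/r$ one obtains
\begin{equation*}
\log N_v(r) = \log \frac{H(r)/r^{d-1}}{H(r/4)/(r/4)^{d-1}} = \int_{r/4}^r \frac{2 N(s)}{s}\, ds,
\end{equation*}
whose $r$-derivative is $\frac{2}{r}[N(r) - N(r/4)] \geq 0$. For $\widetilde{N}_v$, set $K(r) = \int_{B_r \cap \Gamma} v^2 = \int_0^r H(s)\, ds$ and $\phi(r) = r H(r)/K(r)$; a short calculation gives $\phi'(r) = \frac{H(r)}{K(r)}[d + 2N(r) - \phi(r)]$. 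The pointwise bound $\phi(r) \leq d + 2N(r)$ follows by integrating the inequality $H(s) \geq H(r)(s/r)^{d-1 + 2N(r)}$ (a consequence of the log-derivative formula and $N(t) \leq N(r)$ for $t \leq r$) from $0$ to $r$. Hence $\phi' \geq 0$, and since $\frac{d}{dr} \log \widetilde{N}_v(r) = [\phi(r) - \phi(r/2)]/r$, the monotonicity of $\widetilde{N}_v$ follows.

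For the rigidity, constancy of $N_v$ on $[s,t]$ forces $N(r) = N(r/4)$ throughout, hence $N$ is constant on $[s/4, t]$; constancy of $\widetilde{N}_v$ gives $\phi' \equiv 0$, so $d + 2N = \phi$ is constant, again forcing $N$ constant on a subinterval. In either case Almgren's rigidity produces an open set on which $v$ is homogeneous of some degree $m$, and real-analyticity plus unique continuation on the Lipschitz cone propagate this to all of $\Gamma \cap B_2$. Any homogeneous harmonic function on $\Gamma$ with zero Dirichlet data separates as $v(x) = c|x|^m \psi(x/|x|)$ with $\psi$ a Dirichlet eigenfunction of $-\Delta_{S^{d-1}}$ on $\Gamma \cap S^{d-1}$ satisfying $m(m+d-2) = \lambda$, so $m = m_{j, \Gamma}$ for some $j$; the values $N_v \equiv 16^{m_{j,\Gamma}}$ and $\widetilde{N}_v \equiv 4^{m_{j,\Gamma}}$ then follow by direct scaling. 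The principal technical obstacle I anticipate is the boundary integration by parts for $D'(r)$ on a merely Lipschitz cone---in particular, ensuring that the $\partial \Gamma$-boundary term genuinely vanishes in the pointwise sense---but this is standard via approximation of $\Gamma$ by smooth cones or a local flattening of the graph.
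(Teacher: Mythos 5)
Your proof is correct, and it agrees with the paper on the $N_v$ side (both express $\log N_v$ as a sliding-window integral of the Almgren frequency $N$, so that monotonicity of $N$ pushes through to $N_v$, and rigidity for $N_v$ reduces to rigidity for $N$). For the $\widetilde{N}_v$ side, however, you take a genuinely different route. The paper introduces a second, independent generalized frequency
$\widetilde{F}(r) = \frac{\int_{B_r}|\nabla v|^2(r^2-|x|^2)}{\int_{B_r}|v|^2}$,
proves $\widetilde{F}'\geq 0$ by its own Rellich/Cauchy--Schwarz computation (with $|x\cdot\nabla v|^2$ replacing $(\nu\cdot\nabla v)^2$), and then represents $\widetilde{N}_v$ as a sliding-window integral of $\widetilde{F}$. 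You instead work with $\phi(r) = rH(r)/K(r)$ where $K(r)=\int_{B_r\cap\Gamma}v^2$, derive the first-order ODE $\phi' = \frac{H}{K}\bigl(d+2N-\phi\bigr)$, and establish $\phi\leq d+2N$ (hence $\phi'\geq 0$) by a Gronwall-type argument that uses only monotonicity of the standard frequency $N$. These are in fact the same object: a Pohozaev identity on the cone shows $\int_{B_r}(r^2-|x|^2)|\nabla v|^2 = rH(r)-dK(r)$, so $\widetilde{F}=\phi-d$; but your derivation of its monotonicity is more economical in that it avoids a second Cauchy--Schwarz computation and derives everything downstream from a single monotonicity formula. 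The trade-off is that the paper's $\widetilde{F}'$ formula gives the rigidity ($x\cdot\nabla v = Cv$) directly, whereas you must route the rigidity back through constancy of $N$; both paths close the argument. One small imprecision: your final propagation step invokes ``real-analyticity plus unique continuation on the Lipschitz cone,'' but analyticity only holds in the open interior. The cleaner phrasing (and closer to what the paper does) is either to compare $v$ with its homogeneous extension via interior unique continuation on the connected open set $\Gamma\cap B_2$, or, as the paper does, to expand in spherical harmonics and observe that $\p_r v = C_r v$ on an annulus forces all Fourier modes to be pure powers of $r$.
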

The proof of Lemma \ref{lem-230225-1124} is standard, by computing the derivatives of the (generalized) Almgren's frequency functions. See Appendix \ref{sec-230225-1124}. The rest of the section is devoted to the proof of Theorem \ref{thm-230225-1127}.
From now on, let $\Gamma$ be the tangent cone of $\Omega$ at $0$ and $m_j$ be the characteristic constant defined in \eqref{eqn-221018-0432}.
\subsection{Step 1}

We prove that $\liminf N_u(r) < \infty$ implies (SUCP). More precisely, we show
\begin{equation} \label{eqn-230226-1203}
	\limsup N_u(r) \leq C (\liminf N_u(r))^4.
\end{equation}
For this, we first prove the following \textbf{claim}:
for any number $\mu \notin \{m_j\}_j$, there exists $r_0 = r_0 (d,\mu,\Omega)$, such that
$\widetilde{N}_u(r) \leq  2^{2\mu}$ implies $\widetilde{N}_u(r/2) \leq  2^{2\mu}$ for all $r \in (0, r_0)$.
\begin{proof}[Proof of the claim]
	We argue by contradiction. Suppose to the contrary that there exist solutions $u_k \in H^1$ to \eqref{eqn-230314-1018} and $r_k \rightarrow 0$, such that
	$\widetilde{N}_{u_k}(r_k) \leq  2^{2\mu}$, $\widetilde{N}_{u_k}(r_k/2) >  2^{2\mu}$.
	Let
	\begin{equation*}
		\widetilde{u}_k(y) := u_k(r_k y) \big/ \left(\fint_{B_{r_k}} |u_k|^2 \right)^{1/2}.
	\end{equation*}
	Then we have 
	\begin{equation*}
		\Delta \widetilde{u}_k = 0 \,\, \text{in}\,\,r_k^{-1}(\Omega \cap B_{r_k}),
		\quad
		\widetilde{u}_k = 0 \,\, \text{on} \,\, B_1 \setminus r_k^{-1} (\Omega \cap B_{r_k}),
		\quad
		\text{and}\,\,
		\fint_{B_1} |\widetilde{u}_k|^2 = 1.
	\end{equation*}
	By the Caccioppoli inequality and Sobolev embeddings, for all $\epsi \in (0,1)$,
	\begin{equation} \label{eqn-230226-0916}
		\widetilde{u}_k \rightarrow u_\infty, \,\,\text{weakly in}\,\, L^2 (B_1), H^1(B_{1-\epsi}), \,\,\text{and strongly in}\,\, L^2(B_{1-\epsi})
	\end{equation}
	passing to a subsequence. Since $\Omega$ is conical at $0$, we claim that 
	\begin{equation} \label{eqn230226-0924}
			\Delta u_\infty = 0 \,\, \text{in}\,\,B_1 \cap \Gamma,
			\quad
			u_\infty = 0 \,\, \text{on}\,\, \p\Gamma \cap B_1.
	\end{equation}
	
	To see that $u_\infty$ is harmonic, take any test function $\varphi \in C^\infty_c (B_1\cap\Gamma)$. From Definition \ref{def-230315-0126}, for sufficiently large $k$, we have $\operatorname{supp}(\varphi) \subset r^{-1} (\Omega \cap B_{r_k})$. Combining with \eqref{eqn-230226-0916}, we obtain $\int \nabla \widetilde{u}_k \cdot \nabla \varphi = 0$. Passing $k\rightarrow \infty$ and noting that $\varphi$ is chosen arbitrarily, we obtain $\Delta u_\infty = 0$ in $B_1 \cap \Gamma$. For the boundary condition, we first note that $u_\infty = 0$ a.e. on $B_1 \setminus \bar{\Gamma}$ by recalling the $L^2(B_{1-\epsi})$ strong convergence. The desired zero boundary value now follows from the fact that $\Gamma$ is Lipschitz.
	
	Note that from \eqref{eqn-230226-0916}, we have
	\begin{equation*}
		\fint_{B_1} |u_\infty|^2 \leq \liminf_k \fint_{B_1} |\widetilde{u}_k|^2 = 1 \quad \text{and} \quad  \fint_{B_r} |u_\infty|^2 = \lim_k \fint_{B_r} |\widetilde{u}_k|^2,\,\, \forall r<1,
	\end{equation*}
	In particular, $\fint_{B_{ 1/2 }} |u_\infty|^2 \geq 2^{2\mu} > 0$, so $u_\infty \not\equiv 0$ on $\Gamma$. Moreover,
	\begin{equation*}
		\widetilde{N}_{u_\infty} (1) \leq \liminf_k \widetilde{N}_{\widetilde{u}_k} (1) \leq 2^{2\mu},\qquad \widetilde{N}_{u_\infty} (1/2) = \lim_k \widetilde{N}_{\widetilde{u}_k} (1/2) \geq 2^{2\mu}.
	\end{equation*}
	From Lemma \ref{lem-230225-1124}, this implies $\widetilde{N}_{u_\infty}(r) \equiv 4^m$ for some $m\in \{m_j\}_j$, and in particular $\mu = m$. But we have assumed $\mu \notin \{m_j\}$, which is a contradiction. Hence, the claim is proved.
\end{proof}

Using the Claim, now we prove \eqref{eqn-230226-1203}. Let $N_\infty = \liminf N_u(r)$. Since $N_\infty < \infty$, we can find a sequence of $r_k \rightarrow 0$ such that
$N_u (r_k) \leq 2 N_\infty$ for each $k$. 
Using Lemma \ref{lem-230624-1045}, we can deduce that $\widetilde{N}_u(r_k) \leq C N_u (r_k) \leq 2 C N_\infty$.
Now, find a sufficiently small $\epsi>0$ and a characteristic constant $m_N$, such that $2C N_\infty + \epsi \in (2^{2 m_N}, 2^{2 m_{N+1}})$. Fix an $r_k < r_0$ with $r_0$ given in the claim. Applying the claim with $\mu = \log_4 (2C N_\infty + \epsi)$ iteratively, we obtain that for all $j \geq 0$,	$\widetilde{N}_u (2^{-j}r_k) \leq 2 C N_\infty$. 
Finally, for all sufficiently small $r$, we can find some $j$ such that $r\in (2^{-j-2}r_k, 2^{-j-1}r_k)$. Using the second inequality in Lemma \ref{lem-230624-1045}, we obtain $N_u(r) \leq C \Pi_{i=0}^3 \widetilde{N}_u (2^{-i}r_k) \leq C N_\infty^4$.

\subsection{Step 2}

We now are in a position to perform a more precise version of the argument in Step 1, this time using $N$ in place of $\tilde{N}$. Step 1 is used to improve compactness for the less well-behaved quantity $N$.

\begin{lemma} \label{lem-230226-1016}
	Suppose $\liminf_{r\rightarrow 0} N_u (r) < \infty$.
	Then for any $\mu \notin \{m_j\}$, there exists some $r_0 = r_0 (d, \mu, \Omega, u)$, such that $N_u (r) \leq 4^{2\mu}$ implies $N_u (\tau r) \leq 4^{2\mu}$ for any $r<r_0$ and any $\tau \in [1/16,1/4]$.
\end{lemma}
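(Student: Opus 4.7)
I would mimic the compactness/contradiction argument used for the Claim in Step 1, replacing $\widetilde N$ by $N$; the essential new ingredient is the uniform bound $\sup_{r<r_1}N_u(r)\le M$ supplied by Step 1, which lets the blow-up sequence have uniformly controlled $L^2$ growth at every fixed scale. Arguing by contradiction, suppose there exist $r_k\searrow 0$ and $\tau_k\in[1/16,1/4]$ with $N_u(r_k)\le 4^{2\mu}$ and $N_u(\tau_k r_k)>4^{2\mu}$. Put
\[
\widetilde u_k(y):=\frac{u(r_k y)}{\left(\fint_{B_{r_k}}|u|^2\right)^{1/2}},
\]
so that $\fint_{B_1}|\widetilde u_k|^2=1$, $N_{\widetilde u_k}(1)\le 4^{2\mu}$, and $N_{\widetilde u_k}(\tau_k)>4^{2\mu}$. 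Iterating the $\widetilde N$-bound via Lemma \ref{lem-230624-1045} yields $\fint_{B_R}|\widetilde u_k|^2\le C(R,M)$ for every $R\ge 1$.

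Pass to a subsequence so that $\tau_k\to\tau_\infty\in[1/16,1/4]$, and so that (using Caccioppoli in the harmonic regions $r_k^{-1}(\Omega\cap B_{r_k})$, together with $\widetilde u_k\equiv 0$ outside) $\widetilde u_k\rightharpoonup u_\infty$ weakly in $L^2$, weakly in $H^1_{\mathrm{loc}}(B_1)$, and strongly in $L^2_{\mathrm{loc}}(B_1)$. Exactly as in Step 1, the conical hypothesis and Lipschitzness of $\Gamma$ give $\Delta u_\infty=0$ on $\Gamma\cap B_1$ with $u_\infty=0$ on $\partial\Gamma\cap B_1$; the lower bound $\fint_{B_{1/2}}|\widetilde u_k|^2\ge 1/M$ (from $\widetilde N\le M$) passes to the strong limit and ensures $u_\infty\not\equiv 0$.

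The main obstacle is transporting the $N$-inequalities through the limit, since $N$ uses sphere rather than ball averages, and only weak-$H^1$ and strong-$L^2_{\mathrm{loc}}$ convergence are available. The key observation is that $|\widetilde u_k|^2$ is subharmonic, so $f_k(s):=\fint_{\partial B_s}|\widetilde u_k|^2$ is non-decreasing in $s$. Helly's selection theorem extracts a further subsequence with $f_k\to f_\infty$ pointwise off a countable set; the coarea identity
\[
\int_{B_\rho}|\widetilde u_k|^2=\int_0^\rho |\partial B_s|\,f_k(s)\,ds
\]
combined with strong $L^2_{\mathrm{loc}}$ convergence and dominated convergence identifies $f_\infty(s)=\fint_{\partial B_s}|u_\infty|^2$ for a.e.\ $s$. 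Because $u_\infty$ is harmonic and H\"older continuous up to the Lipschitz boundary $\partial\Gamma$, the right-hand side is continuous in $s$, so sandwiching the monotone $f_k$ between values at nearby continuity points upgrades convergence to every $s$. The same monotone sandwich around $s=\tau_\infty$ handles the moving radii: $f_k(\tau_k)\to\fint_{\partial B_{\tau_\infty}}|u_\infty|^2$ and $f_k(\tau_k/4)\to\fint_{\partial B_{\tau_\infty/4}}|u_\infty|^2$ (the denominators are nonzero by unique continuation for the nontrivial harmonic function $u_\infty$ on the connected cone $\Gamma$). Hence $N_{\widetilde u_k}(\tau_k)\to N_{u_\infty}(\tau_\infty)$ and $N_{\widetilde u_k}(1)\to N_{u_\infty}(1)$.

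Passing the inequalities to the limit gives $N_{u_\infty}(1)\le 4^{2\mu}\le N_{u_\infty}(\tau_\infty)$. By the monotonicity half of Lemma \ref{lem-230225-1124} applied to $u_\infty$ on $\Gamma$, also $N_{u_\infty}(\tau_\infty)\le N_{u_\infty}(1)$, so both equal $4^{2\mu}$. The rigidity half of Lemma \ref{lem-230225-1124} then forces $u_\infty$ to be homogeneous of some degree $m_j$ with $N_{u_\infty}\equiv 16^{m_j}=4^{2m_j}$, so $\mu=m_j$, contradicting $\mu\notin\{m_j\}$.
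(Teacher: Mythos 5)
Your proof is correct and follows the paper's overall strategy (blow-up by contradiction, compactness, pass the two $N$-inequalities to the limit, invoke the monotonicity and rigidity of Lemma \ref{lem-230225-1124}), but it handles the technically delicate step---convergence of sphere averages $\fint_{\p B_{\tau_k}}|\widetilde u_k|^2$ at \emph{moving} radii $\tau_k\to\tau_\infty$---by a genuinely different route. The paper writes the difference $\fint_{\p B_{\tau_k}}|u_k|^2 - \fint_{\p B_{\tau_\infty}}|u_k|^2$ as a radial integral of $\tfrac{d}{dr}|u_k(rx)|^2$, bounds it by $\|u_k\|_{L^\infty}\|\nabla u_k\|_{L^2}\sqrt{|\tau_k-\tau_\infty|}$ using the mean value property for the subharmonic $u_k^2$ and Caccioppoli, and separately extracts a subsequence converging strongly in $L^2(\p B_{\tau_\infty})$, $L^2(\p B_{\tau_\infty/4})$, etc. You instead observe that $f_k(s)=\fint_{\p B_s}|\widetilde u_k|^2$ is monotone (subharmonicity again), apply Helly to get pointwise limits off a countable set, identify the limit via the coarea identity and strong $L^2_{loc}$ convergence, and then use continuity of $s\mapsto\fint_{\p B_s}|u_\infty|^2$ together with a monotone sandwich to upgrade to convergence at every $s$, including the moving $\tau_k$. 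This avoids the paper's $L^\infty$ bound and the separate subsequence extraction for boundary traces, and makes the role of subharmonicity more structural; the paper's estimate is more explicit and quantitative. The small change of normalization (by $\fint_{B_{r_k}}|u|^2$ rather than $\fint_{\p B_{r_k/4}}|u|^2$) is inessential given the comparability in Lemma \ref{lem-230624-1045} and Step 1. The remaining points you flag---nontriviality of $u_\infty$ from the uniform bound of Step 1, nonvanishing of the denominator via unique continuation for the harmonic $u_\infty$ on the connected cone, and the use of monotonicity to force $N_{u_\infty}(\tau_\infty)=N_{u_\infty}(1)$ before invoking rigidity---are all handled correctly and match what the paper needs (the last being a bit more explicit than the paper's terse statement).
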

\begin{proof}
	We prove by contradiction. Suppose the contrary that there exist sequences $r_k \rightarrow 0$ and $\tau_k \in [1/16,1/4]$, such that
	\begin{equation} \label{eqn-230226-0630-1}
		N_{u}(r_k) \leq  4^{2\mu}, \quad N_{u}(\tau_k r_k) >  4^{2\mu}.
	\end{equation}
	Recall from Step 1, for all large enough $k$, 
	\begin{equation} \label{eqn-230226-0630-2}
		N_u(4r_k) \leq 2\limsup_{r\rightarrow 0} N_u (r) \leq 2 C (\liminf_{r\rightarrow 0} N_u (r))^4 = 2CM^4,
	\end{equation}
	where we denote $M:= \liminf_{r\rightarrow 0} N_u (r)$. Let
	\begin{equation*}
		u_k (y) := u(r_k y) \big/ (\fint_{\p B_{r_k /4}} |u|^2)^{1/2}.
	\end{equation*}
	Then
	\begin{equation*} 
		\fint_{\p B_{1/4}} |u_k|^2 = 1, \quad
		\begin{cases}
			\Delta u_k = 0 \quad &\text{in}\,\,(\Omega/r_k) \cap B_4,\\
			u_k = 0 \quad &\text{on}\,\, B_4 \setminus (\Omega /r_k).
		\end{cases}
	\end{equation*}
Combining with \eqref{eqn-230226-0630-1} and $\Delta (u_k)^2 \geq 0$, we obtain $\fint_{\p B_1} |u_k|^2 \leq 4^{2\mu} \fint_{\p B_{1/4}} |u_k|^2 = 4^{2\mu}$. Hence, also noting \eqref{eqn-230226-0630-2}, we reach
	\begin{equation} \label{eqn-230226-0958}
		\fint_{B_4} |u_k|^2 \leq \fint_{\p B_4} |u_k|^2 \leq 2 C M^4 \fint_{\p B_1} |u_k|^2 \leq 2 C M^4 4^{2\mu}.
	\end{equation}
	From the Caccioppoli inequality and the Sobolev embedding, passing to a subsequence, for all $\epsi \in (0,4)$,
	\begin{equation*}
		u_k \rightarrow u_\infty \quad \text{weakly in} \,\, L^2(B_4), H^1(B_{4-\epsi}), \,\,\text{strongly in}\,\, L^2(B_{4-\epsi}).
	\end{equation*}
	Passing to further subsequences, we can also require that $\tau_k \rightarrow \tau_\infty \in [1/16, 1/4]$ and
	\begin{equation} \label{eqn-230226-0937}
		u_k \rightarrow u_\infty \quad \text{strongly in} \,\, L^2(\p B_{\tau_\infty}), L^2(\p B_{\tau_\infty/4}), L^2(\p B_{1}), L^2(\p B_{1/4}).
	\end{equation}
Hence,
	\begin{equation} \label{eqn-230226-1012-1}
		N_{u_\infty}(1) 
		= 
		\frac{\fint_{\p B_1} |u_\infty|^2}{\fint_{\p B_{1/4}} |u_\infty|^2} 
		= 
		\lim_{k\rightarrow \infty} \frac{\fint_{\p B_1} |u_k|^2}{\fint_{\p B_{1/4}} |u_k|^2} 
		= 
		\lim_{k\rightarrow \infty} N_{u} (r_k)
		\leq
		4^{2\mu}.
	\end{equation}
Here, in the last inequality we used  \eqref{eqn-230226-0630-1}.
	Next, we show
	\begin{equation} \label{eqn-230226-1007}
		\fint_{\p B_{\tau_k}} |u_k|^2 \rightarrow \fint_{\p B_{\tau_\infty}} |u_\infty|^2 \quad \text{and}\quad \fint_{\p B_{\tau_k/4}} |u_k|^2 \rightarrow \fint_{\p B_{\tau_\infty/4}} |u_\infty|^2.
	\end{equation}
	For the first limit, we estimate
	\begin{align}
		\left| \fint_{\p B_{\tau_k}} |u_k|^2 - \fint_{\p B_{\tau_\infty}} |u_k|^2 \right|
		&=
		\left| \fint_{\p B_1} \left( |u_k(\tau_k x )|^2 - |u_k(\tau_\infty x)|^2 \right) \, d\sigma_x \right|	\nonumber
		\\&=
		\left| \fint_{\p B_1} \int_{\tau_\infty}^{\tau_k} \frac{d}{dr}|u_k(r x)|^2 \,dr d\sigma_x  \right|	\nonumber
		\\&\leq
		C\|u_k\|_{L^\infty(B_{1/4})} \left| \fint_{\p B_1} \int_{\tau_\infty}^{\tau_k} |\nabla u_k(r x)| \,dr d\sigma_x  \right|
		. \label{eqn-230621-0101}
	\end{align}
	From the mean value property for $u_k^2$ (which is subharmonic) and \eqref{eqn-230226-0958}, we have $\|u_k\|_{L^\infty(B_{1/4})} \leq C_\mu$. Hence,
	\begin{align*}
	\text{RHS of \eqref{eqn-230621-0101}}
		&\leq
		C \left| \fint_{\p B_1} \int_{\tau_\infty}^{\tau_k} |\nabla u_k(r x)| \,dr d\sigma_x  \right|
		\\&\leq
		C \left| \int_{\tau_\infty}^{\tau_k} \fint_{\p B_r}  |\nabla u_k| \,d\sigma dr   \right|
		\leq
		C \|\nabla u_k\|_{L^2(B_{1/4})}\sqrt{|\tau_k - \tau_\infty|} \rightarrow 0.
	\end{align*}
	The last step used that $\nabla u_k$ is uniformly bounded in $H^1(B_1)$. Combining with \eqref{eqn-230226-0937}, we have
	\begin{equation*}
		\left|\fint_{\p B_{\tau_k}} |u_k|^2 - \fint_{\p B_{\tau_\infty}} |u_\infty|^2\right|
		\leq
		\left| \fint_{\p B_{\tau_k}} |u_k|^2 - \fint_{\p B_{\tau_\infty}} |u_k|^2 \right| + \left| \fint_{\p B_{\tau_\infty}} |u_k|^2 - \fint_{\p B_{\tau_\infty}} |u_\infty|^2 \right|
		\rightarrow
		0.
	\end{equation*}
	This proves the first convergence in \eqref{eqn-230226-1007}. The proof for the second convergence is almost identical. From \eqref{eqn-230226-1007} and \eqref{eqn-230226-0630-1},
	\begin{align} \label{eqn-230226-1012-2}
		N_{u_\infty} (\tau_\infty) 
		= 
		\frac{ \fint_{\p B_{\tau_\infty}} |u_\infty|^2 }{ \fint_{\p B_{\tau_\infty/4}} |u_\infty|^2 }
		=
		\lim_{k\rightarrow \infty} \frac{ \fint_{\p B_{\tau_k}} |u_k|^2 }{ \fint_{\p B_{\tau_k/4}} |u_k|^2 }
		=
		\lim_{k\rightarrow \infty} N_u (\tau_k r_k)
		\geq
		4^{2\mu}.
	\end{align}
	As before, $u_\infty$ satisfies \eqref{eqn230226-0924}. From \eqref{eqn-230226-1012-1}, \eqref{eqn-230226-1012-2}, and the rigidity in Lemma \ref{lem-230225-1124}, we must have $N_{u_\infty} \equiv 16^m$ for some $m \in \{m_j\}$. Hence, $\mu = m$, a contradiction.
\end{proof}

\subsection{Step 3: Conclusion of the proof of Theorem \ref{thm-230225-1127}}

We may as well assume that $\liminf N_u(r) < + \infty$. Recall that $N_u(r) \geq 1$ for all $r$, and let $m := 2^{-1} \log_4 (\liminf N_u(r)).$ We have $m \in [0,\infty)$.
	
Now, we find a sequence of positive numbers $\epsi_k \rightarrow 0$, such that $m + \epsi_k \notin \{m_j\}$. For each $k$, we further find a small enough $r_k$ with $N_u(r_k) < m + \epsi_k$ and $r_k < r_0(d,m + \epsi_k, \Omega,u)$ (where $r_0$ is given in Lemma \ref{lem-230226-1016}). Applying Lemma \ref{lem-230226-1016} iteratively, we have $sup_{r \leq r_k/4} N_u(r) \leq 4^{2 (m + \epsi_k)}$, and so in particular $\limsup_{r \rightarrow 0} N_u(r) \leq 4^{2 (m + \epsi_k)}$. Sending $k\rightarrow \infty$,
\begin{equation*}
	\limsup_{r\rightarrow 0} N_u(r) \leq 4^{2 m} = \liminf N_u(r).
\end{equation*}
This implies, passing to a subsequence, $u_r = u(r\cdot)/(\fint_{\p B_r} |u|^2)^{1/2}$ converges to a non-trivial, homogeneous harmonic function on $B_1 \cap \Gamma$, with the homogeneity $m$. This implies $m$ must be one of the characteristic constants defined in \eqref{eqn-221018-0432}.

\section{Eigenvalues, eigenfunctions, and Green's functions on cones} \label{sec-230703-0347}
Before proving Theorem \ref{thm-221218-1120}. we make some preparatory remarks concerning the Green's function on the limit cone.  Let $\Gamma\subset \bR^d$ be a cone with vertex at the origin and $\Sigma = \Gamma \cap \p B_1$ be its spherical cross-section. 

\subsection{Eigenvalues and eigenfunctions of spherical cross-sections}

Let 
\begin{equation*}
	\lambda_1 \leq \lambda_2 \leq \cdots \lambda_k \leq \cdots
\end{equation*}
be the Dirichlet eigenvalues (counting multiplicity) of the spherical cross-section $\Sigma$ and $\{\psi_k\}_{k=1}^\infty$ be a corresponding basis of eigenfunctions, orthonormal in $L^2$.
We have the following properties.
\begin{lemma}\label{lem-221220-1021}
	For each $q > (d-1)/4$, there exists $C = C(q, d, \Sigma)>0$ independent of $k$, such that
	\begin{equation*}
		\| \psi_k \|_{L^\infty(\Sigma)} \leq C \lambda_k^q.
	\end{equation*}
\end{lemma}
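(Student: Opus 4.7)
The plan is to combine the spectral representation of the Dirichlet heat kernel on $\Sigma$ with a standard on-diagonal heat kernel bound. This will actually yield the (essentially sharp) exponent $(d-1)/4$, which suffices because $q > (d-1)/4$ and $\lambda_k \geq \lambda_1 > 0$.

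Let $p_t(x,y)$ denote the heat kernel of $-\Delta_\Sigma$ with Dirichlet boundary conditions on $\p\Sigma$. Since $\{\psi_j\}_{j=1}^\infty$ is a complete $L^2$-orthonormal basis of eigenfunctions, the spectral decomposition $p_t(x,y) = \sum_j e^{-\lambda_j t}\psi_j(x)\psi_j(y)$ together with the positivity of every diagonal term gives
\begin{equation*}
	e^{-\lambda_k t}\,\psi_k(x)^2 \;\leq\; p_t(x,x), \qquad \forall\, k \geq 1,\ x \in \Sigma.
\end{equation*}
On the other hand, $\Sigma$ sits inside the compact $(d-1)$-dimensional manifold $S^{d-1}$, so the standard on-diagonal Gaussian bound on $S^{d-1}$ combined with Dirichlet domain monotonicity (the Dirichlet heat kernel on a subdomain is pointwise dominated by the ambient one) yields $\sup_{x \in \Sigma} p_t(x,x) \leq C(d)\,t^{-(d-1)/2}$ for $t \in (0,1]$. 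Choosing $t := \min(1, 1/\lambda_k)$ I would then obtain $\psi_k(x)^2 \leq C\,\lambda_k^{(d-1)/2}$, hence $\|\psi_k\|_{L^\infty(\Sigma)} \leq C\,\lambda_k^{(d-1)/4} \leq C(q,d,\Sigma)\,\lambda_k^q$, as claimed.

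A purely PDE-based alternative is Moser iteration: testing the equation against $|\psi_k|^{p-2}\psi_k$ and applying Sobolev's inequality on the $(d-1)$-dimensional manifold $\Sigma$ produces the recursion $\|\psi_k\|_{L^{p\kappa}} \leq (Cp\lambda_k)^{1/p}\|\psi_k\|_{L^p}$ for the Sobolev conjugate $\kappa$; iterating with $p_j = 2\kappa^j$ from the normalization $\|\psi_k\|_{L^2}=1$ recovers the same exponent via $\sum_{j\geq 0}(2\kappa^j)^{-1} = \tfrac{\kappa}{2(\kappa-1)}$, which equals $(d-1)/4$ when $d \geq 4$ and, in the borderline case $d=3$ (where $\dim\Sigma = 2$ and $H^1 \not\hookrightarrow L^\infty$ forces $\kappa$ to be arbitrarily large but finite), only approaches $1/2 = (d-1)/4$ from above. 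This is precisely why the lemma is stated with strict inequality $q > (d-1)/4$ rather than equality. There is no substantive obstacle; the only minor point is justifying the heat-kernel bound on a possibly non-smooth $\Sigma$, which domain monotonicity settles immediately, absorbing all geometry of $\Sigma$ into the $\Sigma$-dependent constant.
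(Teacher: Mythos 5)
Your heat-kernel argument is correct and in fact proves a slightly stronger statement, giving the sharp exponent $(d-1)/4$ rather than merely $q > (d-1)/4$. It is a genuinely different route from the paper. The paper instead applies a local maximum principle for subsolutions (Gilbarg--Trudinger Theorem 8.17) to $|\psi_k|$, which is a subsolution of $-\Delta_{S^{d-1}}v \leq \lambda_k v$: this gives $\|\psi_k\|_{L^\infty} \leq C\bigl[\|\lambda_k\psi_k\|_{L^{2q}} + \|\psi_k\|_{L^2}\bigr]$ for $2q > (d-1)/2$, and then an interpolation $\|\psi_k\|_{L^{2q}} \leq \|\psi_k\|_{L^\infty}^{(q-1)/q}\|\psi_k\|_{L^2}^{1/q}$ followed by Young's inequality and absorption closes the estimate. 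That argument is local and elementary but produces a non-sharp exponent by design, which is why the paper only states the lemma for $q$ strictly larger than $(d-1)/4$. Your approach trades that for a piece of global spectral machinery: the spectral decomposition $p_t(x,x) = \sum_j e^{-\lambda_j t}\psi_j(x)^2$ plus on-diagonal heat-kernel bounds and domain monotonicity, which handles any open $\Sigma \subset S^{d-1}$ with no regularity whatsoever and hits the Weyl-sharp exponent. Your Moser-iteration alternative is essentially an unpacking of the same local boundedness result that the paper invokes as a black box (the constant tracking $\sum_j (2\kappa^j)^{-1} = \kappa/(2(\kappa-1))$ is correct, converging to $(d-1)/4$ for $d \geq 4$ and approaching $1/2$ from above for $d=3$). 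One small note: the strict inequality in the lemma's statement is an artifact of the paper's chosen proof, not of the result itself, as your heat-kernel argument shows even in the borderline case $d=3$; you present it as if the strictness were necessary, but it is only necessary for the local-maximum-principle proof.
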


\begin{proof}
	Applying the local maximum principle to $v = |\psi_k|$, which is a weak subsolution of $- \Delta_{S^{n-1}} v \leq \lambda_k v$, in charts (see e.g. \cite{MR1814364}[Theorem 8.17]),
	\[
		\|\psi_k\|_{L^\infty(\Sigma)} \leq C [\|\lambda_k  \psi_k\|_{L^{2q}} + \|\psi_k\|_{L^2}]
	\]
	noting $2q > (d-1)/2$. Then
	\[
		\|\lambda_k  \psi_k\|_{L^{2q}} \leq \lambda_k \|\psi_k\|_{L^\infty}^{\frac{q - 1}{q}}\|\psi_k\|_{L^2}^{\frac{1}{q}} \leq \epsilon \|\psi_k\|_{L^\infty} + C_\epsilon \lambda_k^{q},
	\]
	using that $\|\psi_k\|_{L^2} = 1$. Choosing $\epsilon$ small and reabsorbing the first term gives
	\[
		\|\psi_k\|_{L^\infty(\Sigma)} \leq C \lambda_k^{q}.
	\]
\end{proof}

\begin{lemma} \label{lem-230216-0420}
	For some $C=C(d)$, $\lambda_k \geq \frac{1}{C}k^{2/(d-1)}$.
\end{lemma}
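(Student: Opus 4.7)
The plan is to combine the min-max characterization of eigenvalues with domain monotonicity, reducing the question to the explicit spectrum of $-\Delta_{S^{d-1}}$ on the full sphere, where a straightforward dimension-counting argument yields the bound.

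First, I would note that for any $k$-dimensional subspace $V \subset H_0^1(\Sigma)$, extending functions by zero yields a $k$-dimensional subspace of $H^1(S^{d-1})$ with identical Rayleigh quotients. The Courant--Fischer min-max principle thus gives
\[
\lambda_k(\Sigma) \;\geq\; \mu_k(S^{d-1}),
\]
where $0 = \mu_1 \leq \mu_2 \leq \cdots$ denote the eigenvalues of $-\Delta_{S^{d-1}}$ on the full sphere, listed with multiplicity. Next, I would invoke the explicit sphere spectrum: the distinct eigenvalues are $\nu_\ell = \ell(\ell+d-2)$, $\ell = 0,1,2,\ldots$, with multiplicities bounded by $C(d)(\ell+1)^{d-2}$ (the dimension of spherical harmonics of degree $\ell$). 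Summing, the total count of $\mu_j$'s at or below level $L$ is at most $C(d)(L+1)^{d-1}$. Hence if $\mu_k$ sits at level $L$, then $k \leq C(d)(L+1)^{d-1}$, so $L \geq c(d)\, k^{1/(d-1)} - 1$ and $\mu_k \geq L(L+d-2) \geq c(d)\, k^{2/(d-1)}$ for $k$ beyond some dimensional threshold $k_0(d)$.

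The main (minor) bookkeeping issue is the index shift between the Dirichlet indexing of $\Sigma$ (starting at $\lambda_1 > 0$) and the closed-manifold indexing of $S^{d-1}$ (starting at $\mu_1 = 0$), which makes the min-max bound vacuous at $k = 1$. I would handle the finitely many small indices $k < k_0(d)$ via the trivial lower bound $\lambda_k(\Sigma) \geq \lambda_1(\Sigma) > 0$, valid because $\Sigma$ is a proper subdomain of $S^{d-1}$, and absorb the loss into the constant. Beyond this small-$k$ patch, there is no real obstacle: the argument is a standard dimension count combined with domain monotonicity.
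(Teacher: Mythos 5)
Your proof is correct and takes essentially the same approach as the paper: domain monotonicity $\lambda_k(\Sigma)\ge\lambda_k(S^{d-1})$ followed by counting spherical harmonics level by level, with multiplicity $\approx (\ell+1)^{d-2}$. The only difference is that you explicitly patch the indexing mismatch at small $k$ via the trivial bound $\lambda_k(\Sigma)\ge\lambda_1(\Sigma)>0$, whereas the paper implicitly absorbs this into the constant $C$.
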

\begin{proof}
	Since $\Sigma \subset \p B_1$, we have $\lambda_k \geq \lambda_k (S^{d-1})$,
	where $\lambda_k(S^{d-1})$ is the $k$th eigenvalue of Laplacian operator on a $(d-1)$-dimensional unit sphere. We know that $\{\lambda_k(S^{d-1})\}_k$ contains
	\begin{equation*}
		\text{exactly one zero and}\,\,
		\begin{pmatrix}
			d-1+j \\ j
		\end{pmatrix}
		-
		\begin{pmatrix}
			d+j-2 \\ j-1
		\end{pmatrix}
		\,\,\text{copies of}\,\,
		j(j+d-2), \quad j = 1,2,\ldots .
	\end{equation*}
	By a calculation and Stirling's formula,
	\begin{align*}
		\begin{pmatrix}
			d-1+j \\ j
		\end{pmatrix} 
		-
		\begin{pmatrix}
			d+j-2 \\ j-1
		\end{pmatrix}
		&=
		\frac{(j+ d-2)!}{j! (d-2)!}
		\\&\approx
		\frac{(j+d-2)^{j+d-2} \sqrt{2\pi (j+d-2)}}{e^{j+d-2}} \big/ \frac{j^j \sqrt{2\pi j}}{e^j}
		\approx
		j^{d-2}.
	\end{align*}
	Hence, counting all eigenvalues up to the size $j(j+d-2) \approx j^2$, we reach
	\begin{equation*}
		\lambda_k \geq \frac{1}{C}\left( \left( \frac{k}{C} \right)^{1/(d-1)} - 1 \right)^2 \geq \frac{1}{C} k^{2/(d-1)}.
	\end{equation*}
\end{proof}
As a direct corollary of Lemma \ref{lem-230216-0420},
\begin{equation} \label{eqn-230202-0546}
	\sum_{j=1}^{\infty} \mu^{ \sqrt{\lambda_k} } < \infty, \quad \forall \mu\in (0,1).
\end{equation}

\subsection{Green's function on a cone and orthogonal expansions} \label{sec-221228-1031}

From standard elliptic regularity theory, the Green's function $G(x,y)$ exists on an arbitrary Lipschitz cone $\Gamma$. More precisely, for every $x \in \Gamma\cap B_1$, $f, g_i \in L^\infty (\Gamma \cap B_1)$, and $h \in C^0_c (\p\Gamma \cap B_1)$,
the unique continuous weak solution to
\begin{equation*}
	\begin{cases}
		\Delta u = f + \partial_i g_i \quad \text{in}\,\,\Gamma \cap B_1,\\
		u = h \quad \text{on} \,\, \p(\Gamma \cap B_1),\\
		u\rightarrow 0 \quad \text{as} \,\, |x|\rightarrow \infty
	\end{cases}
\end{equation*}
can be represented by
\begin{equation*}
	u(x) = \int_\Gamma G(x,y) f(y)\,dy - \int_\Gamma \frac{\p}{\p y_i} G(x,y) g_i(y)\,dy + \int_{\p\Gamma} h(y) \frac{\p}{\p \vec{n_y}} G(x,y) \,d\sigma_y.
\end{equation*}
See, for instance \cite[Theorem~1.1]{MR657523} and \cite{MR1282720}. See also \cite{MR2341783} for discussions on unbounded domains.
The following properties are standard: symmetry $G(x,y) = G(y,x)$, scaling $G(\lambda x, \lambda y) = \lambda^{2-d} G(x,y)$, and a pointwise bound
\begin{equation}\label{eqn-221218-1113}
	G(x,y) \leq C \frac{1}{|x-y|^{d-2}}.
\end{equation} 
Furthermore, we have the following derivative bounds when $\Gamma$ is regular enough.
\begin{lemma} \label{lem-221227-0652}
	Suppose that $\Gamma$ satisfies Assumption \ref{ass-230119-0339}. Then for $x,y \in \Gamma, x\neq y$, we have
	\begin{equation} \label{eqn-221227-0835}
		|\nabla_y G(x,y)| \leq C ( |y|^{-1} + |x-y|^{-1} ) |x-y|^{2-d},
	\end{equation}
	\begin{equation} \label{eqn-221227-0816-1}
		|\nabla_y G(x,y)| \leq C \delta(x) ( |x|^{-1} + |x-y|^{-1} ) ( |y|^{-1} + |x-y|^{-1} ) |x-y|^{2-d},
	\end{equation}
	where $\delta(x) = \dist(x,\p\Gamma)$.
\end{lemma}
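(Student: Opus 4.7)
The plan is to combine the pointwise bound \eqref{eqn-221218-1113} with scale-invariant boundary gradient and Hopf-type decay estimates for harmonic functions vanishing on $\partial\Gamma$, choosing at each step a working scale that avoids both the cone vertex and the pole of the Green's function. For \eqref{eqn-221227-0835}, fix $x$ and view $u(z) := G(x, z)$ as a function of $z$, harmonic on $\Gamma \setminus \{x\}$ and vanishing on $\partial \Gamma$. Set $r := \tfrac{1}{8}\min(|y|, |x-y|)$, so that $B_{2r}(y)$ avoids $x$ and lies at distance $\gtrsim r$ from the vertex; on this ball \eqref{eqn-221218-1113} gives $|u| \leq C|x-y|^{2-d}$. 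Rescaling by $1/r$ brings the configuration to unit scale, where the homogeneity of $\Gamma$ together with Assumption \ref{ass-230119-0339} supplies the regularity of $\partial\Gamma$ with norms independent of $r$. The standard boundary gradient estimate for harmonic functions with zero Dirichlet data on a $C^{1,\text{Dini}}$ or semiconvex boundary then yields
\[
|\nabla u(y)| \leq C\|u\|_{L^\infty(B_r(y)\cap\Gamma)}/r \leq C(|y|^{-1}+|x-y|^{-1})|x-y|^{2-d},
\]
using $r^{-1} \leq 8(|y|^{-1}+|x-y|^{-1})$. This is \eqref{eqn-221227-0835}.

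For \eqref{eqn-221227-0816-1}, the key observation is that $h(z) := \nabla_y G(z, y)$, viewed at fixed $y$, is itself harmonic on $\Gamma \setminus \{y\}$ and vanishes on $\partial \Gamma$. Harmonicity follows by commuting $\Delta_z$ with $\nabla_y$; the boundary vanishing uses symmetry, since for any $z \in \partial\Gamma$ the map $y' \mapsto G(z, y') = G(y', z)$ is identically zero on $\Gamma$, so its $y'$-gradient at $y' = y$ is zero. Now set $r := \tfrac{1}{8}\min(|x|, |x-y|)$, and suppose first that $\delta(x) \leq r$. The linear Hopf-type decay estimate, again available at unit scale after rescaling, gives
\[
|h(x)| \leq C\frac{\delta(x)}{r}\sup_{B_r(x)\cap\Gamma}|h|.
\]
On $B_r(x)$ one has $|z| \sim |x|$ and $|z-y| \sim |x-y|$, so \eqref{eqn-221227-0835} bounds $|h(z)|$ by $C(|y|^{-1}+|x-y|^{-1})|x-y|^{2-d}$; combining with $r^{-1} \leq 8(|x|^{-1}+|x-y|^{-1})$ yields \eqref{eqn-221227-0816-1}. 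In the opposite regime $\delta(x) > r$, one has $\delta(x)(|x|^{-1}+|x-y|^{-1}) \gtrsim 1$, and the weaker bound \eqref{eqn-221227-0835} already implies \eqref{eqn-221227-0816-1}.

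The main obstacle in this plan is verifying the uniform scale-invariant boundary estimates — the gradient bound used in the first step and the linear Hopf decay used in the second — for harmonic functions vanishing on a piece of $\partial\Gamma$ away from the vertex. For $C^{1,\text{Dini}}$ graphs these estimates are classical (Lieberman, Widman). The semiconvex case is more delicate, since the boundary is not pointwise $C^1$; however, $g + C|x'|^2/2$ being convex reduces $\Gamma$ to a convex cone perturbed by a smooth quadratic, on which the Hopf lemma and boundary Lipschitz gradient estimate are standard, and the perturbation can be absorbed via explicit barriers. The cone homogeneity is what makes the rescaling truly uniform: after dilating by $1/r$, the relevant portion of $\partial\Gamma$ is, up to a vertical translation, the graph of $g$ over a bounded subset of $\bR^{d-1}$, so the regularity norms are independent of $r$. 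Once these boundary estimates are in hand, the remainder of the argument is pure scale bookkeeping.
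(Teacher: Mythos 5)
Your proof of \eqref{eqn-221227-0835} matches the paper's: same choice of working scale $r\sim\min(|y|,|x-y|)$, same use of the pointwise bound \eqref{eqn-221218-1113} plus a scale-invariant boundary Lipschitz estimate for harmonic functions vanishing on a portion of $\p\Gamma$, with the cone homogeneity making the rescaled boundary regularity uniform.

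For \eqref{eqn-221227-0816-1}, you take a genuinely different route. The paper first upgrades the bound on $G$ itself: it integrates \eqref{eqn-221227-0835} from the nearest boundary point to get $G(x,y)\leq C\delta(y)(|y|^{-1}+|x-y|^{-1})|x-y|^{2-d}$, invokes symmetry $G(x,y)=G(y,x)$ to swap this to a bound in $\delta(x)$, and then re-runs the Lipschitz estimate from the proof of \eqref{eqn-221227-0835} with the improved pointwise bound in place of \eqref{eqn-221218-1113}. You instead make the structural observation that $h(z):=\nabla_y G(z,y)$ is, for fixed $y$, a vector of harmonic functions on $\Gamma\setminus\{y\}$ that vanishes on $\p\Gamma$ (the latter via symmetry: $G(z,\cdot)\equiv 0$ for $z\in\p\Gamma$, so its $y$-gradient vanishes), and apply a Hopf-type linear decay estimate directly to $h$ at the point $x$, supplying $\sup|h|$ via \eqref{eqn-221227-0835}. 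Both approaches are correct and rest on the same boundary regularity ingredient; yours packages the $\delta(x)$ gain as a decay property of the gradient kernel, while the paper packages it as an improved $L^\infty$ bound on $G$ fed back into the Lipschitz estimate. Your version has the modest advantage of avoiding the extra pass through the Lipschitz estimate; the paper's avoids needing to verify harmonicity and boundary vanishing of the differentiated kernel. One small imprecision: a Hopf-type decay of the form $|h(x)|\leq C\,\delta(x)\,r^{-1}\sup_{B}|h|$ needs $B$ to be a ball slightly larger than $B_r(x)$ (e.g.\ $B_{2r}(x^*)\subset B_{3r}(x)$ with $x^*$ the boundary projection); just shrink your $r$ by another fixed factor so that $B_{3r}(x)$ still avoids $0$ and $y$, and the comparability $|z|\sim|x|$, $|z-y|\sim|x-y|$ still holds.
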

The proof is standard, which is based on scaling, the point-wise bound \eqref{eqn-221218-1113}, and a local Lipschitz estimate coming from the smoothness of $\Sigma$. For completeness, we provide a proof in Appendix \ref{sec-230315-0811}.
Throughout the rest of the paper paper, we denote
\begin{equation*}
	K_i (x,y) := \frac{\p G(x,y)}{\p y_i} \,\, \text{for} \,\, x,y\in\Gamma, x\neq y \quad\text{and} \quad  k(x,y) := \frac{\p G (x,y)}{\p \vec{n}_y} \,\, \text{for} \,\, x \in \Gamma, y \in \p\Gamma \setminus \{0\}.
\end{equation*}
From Lemma \ref{lem-221227-0652}, we know that $K_i(\cdot,y), k(\cdot, y) \in L^\infty_{loc} (B_{|y|})$ are harmonic functions, which have orthogonal expansions
\begin{equation*}
	K_i (x,y)
	=
	\sum_{j=1}^\infty b_i^{(j)} (y) |x|^{m_j} \psi_j \left( x / |x| \right),
	\quad
	K (x,y)
	=
	\sum_{j=1}^\infty b^{(j)} (y) |x|^{m_j} \psi_j \left( x / |x| \right),
\end{equation*}
where
\begin{equation*}
	b_i^{(j)} (y)
	=
	\frac{ \int_{\p B_{2|y|/3} \cap \Gamma} K_i(z,y)|z|^{m_j} \psi_j(z/|z|) \,dz }{ \int_{\p B_{2|y|/3} \cap \Gamma} \left( |z|^{m_j} \psi_j(z/|z|) \right)^2 \,dz },
	b^{(j)} (y)
	=
	\frac{ \int_{\p B_{2|y|/3} \cap \Gamma} k(z,y)|z|^{m_j} \psi_j(z/|z|) \,dz }{ \int_{\p B_{2|y|/3} \cap \Gamma} \left( |z|^{m_j} \psi_j(z/|z|) \right)^2 \,dz }.
\end{equation*}
Here, $2/3$ could have been any fixed number smaller than $1$. By scaling,
\begin{equation} \label{eqn-230518-1255}
	b_i^{(j)} (y) = |y|^{1-d-m_j} b_i^{(j)} (y/|y|),
	\quad
	b^{(j)} (y) = |y|^{1-d-m_j} b^{(j)} (y/|y|).
\end{equation}
Denote partial sums as
\begin{equation} \label{eqn-230626-1015}
	\begin{split}
	K_i^{(N)} (x,y) = \sum_{j\leq N} b_i^{(j)}\left( y / |y| \right) |y|^{1-d-m_j} |x|^{m_j} \psi_j \left( x / |x| \right),\\
	k^{(N)} (x,y) = \sum_{j\leq N} b^{(j)}\left( y / |y| \right) |y|^{1-d-m_j} |x|^{m_j} \psi_j \left( x / |x| \right).
	\end{split}
\end{equation}
We have the following estimates.
\begin{lemma} 
	Suppose that $\Gamma$ satisfies Assumption \ref{ass-230119-0339} and $b_i^{(j)}, b^{(j)}$ are defined as above. Then for some $C=C(d,\Sigma)$,
	\begin{equation} \label{eqn-230315-1042}
		\left| b_i^{(j)}\left( y / |y| \right) \right| + \left| b^{(j)}\left( y / |y| \right) \right| 
		\leq 
		C (\lambda_j)^{\frac{d}{4}} \left(3 / 2\right)^{m_j}, \quad j= 1, 2, \ldots.
	\end{equation}
Furthermore, we have remainder estimates: for some $C=C(d,\Sigma, N)$,
	\begin{equation} \label{eqn-221221-0701}
		\begin{split}
		|K_i (x,y) - K_i^{(N)} (x,y)| \leq C \frac{|x|^{m_{N+1}}}{|y|^{m_{N+1} + d - 1}}, \quad \forall y \in \Gamma \setminus \{0\}, x \in \Gamma \cap B_{|y|/2}, \\
		|k (x,y) - k^{(N)} (x,y)| \leq C \frac{|x|^{m_{N+1}}}{|y|^{m_{N+1} + d - 1}}, \quad \forall y \in \p\Gamma \setminus \{0\}, x \in \Gamma \cap B_{|y|/2}.
		\end{split}
	\end{equation}
\end{lemma}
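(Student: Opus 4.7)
The lemma has two assertions: the termwise coefficient bound \eqref{eqn-230315-1042} and the remainder estimate \eqref{eqn-221221-0701}. The plan is to prove the coefficient bound first and then use it, together with the spectral lower bound from Lemma \ref{lem-230216-0420}, to obtain the remainder estimate. I will treat only $K_i$ and $b_i^{(j)}$; the treatment of $k$ and $b^{(j)}$ is entirely parallel, since the same gradient bound from Lemma \ref{lem-221227-0652} controls the normal derivative on $\partial \Gamma \setminus \{0\}$ at the relevant spheres.

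For \eqref{eqn-230315-1042}, the scaling identity \eqref{eqn-230518-1255} lets me reduce to $|y|=1$. The denominator in the formula defining $b_i^{(j)}(y)$ can be computed exactly: parameterizing $\partial B_{2/3}\cap\Gamma$ and using $L^2(\Sigma)$-orthonormality of $\psi_j$ gives $(2/3)^{2m_j+d-1}$. For the numerator, Lemma \ref{lem-221227-0652} applied at $z\in\partial B_{2/3}\cap\Gamma$ with $|y|=1$ (so $|z-y|\geq 1/3$) yields a uniform pointwise bound $|K_i(z,y)|\leq C(d)$; combined with $\|\psi_j\|_{L^\infty(\Sigma)}\leq C\lambda_j^{d/4}$ from Lemma \ref{lem-221220-1021} applied with $q=d/4>(d-1)/4$, this gives a numerator bound $C\lambda_j^{d/4}(2/3)^{m_j}$. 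Dividing produces $|b_i^{(j)}(y/|y|)|\leq C\lambda_j^{d/4}(3/2)^{m_j}$, which is \eqref{eqn-230315-1042}.

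For \eqref{eqn-221221-0701}, the function $x\mapsto K_i(x,y)$ is harmonic on $\Gamma\cap B_{|y|}$ and vanishes on $\partial\Gamma\cap B_{|y|}$, so its expansion into the homogeneous harmonic modes $|x|^{m_j}\psi_j(x/|x|)$ converges uniformly on compact subsets of $B_{|y|}\cap\Gamma$. Writing the tail $K_i-K_i^{(N)}$ as $\sum_{j>N}b_i^{(j)}(y)|x|^{m_j}\psi_j(x/|x|)$, I would bound each term by combining the coefficient estimate from the previous step, the $L^\infty$ bound on $\psi_j$, and the scaling \eqref{eqn-230518-1255}, obtaining the envelope $C\lambda_j^{d/2}(3/2)^{m_j}|y|^{1-d-m_j}|x|^{m_j}$. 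To extract the advertised factor $|x|^{m_{N+1}}/|y|^{m_{N+1}+d-1}$, I split $(|x|/|y|)^{m_j}=(|x|/|y|)^{m_{N+1}}(|x|/|y|)^{m_j-m_{N+1}}$ and exploit $|x|/|y|\leq 1/2$ to bound the second factor by $(1/2)^{m_j-m_{N+1}}$. The tail is then controlled by $2^{m_{N+1}}\bigl(\sum_{j>N}\lambda_j^{d/2}(3/4)^{m_j}\bigr)\cdot |x|^{m_{N+1}}/|y|^{m_{N+1}+d-1}$.

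The main obstacle is justifying that the residual sum $\sum_{j}\lambda_j^{d/2}(3/4)^{m_j}$ is finite. This is exactly the payoff of \eqref{eqn-230202-0546}: since $m_j\approx\sqrt{\lambda_j}$ by \eqref{eqn-221018-0432}, the exponential decay $(3/4)^{m_j}\lesssim e^{-c\sqrt{\lambda_j}}$ dominates the polynomial factor $\lambda_j^{d/2}$, and summability follows from the Weyl-type growth in Lemma \ref{lem-230216-0420}. A secondary technical point is to justify convergence of the eigenfunction expansion of $K_i(\cdot,y)$ on every intermediate cross-section, which follows from standard spectral theory on the Lipschitz cross-section $\Sigma$ combined with the boundedness of $K_i(\cdot,y)$ on $\partial B_\rho\cap\Gamma$ for each $\rho<|y|$ supplied by Lemma \ref{lem-221227-0652}.
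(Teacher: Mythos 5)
Your proof is correct and follows essentially the same approach as the paper. The only cosmetic difference is in verifying \eqref{eqn-230315-1042}: the paper first rescales $y$ so that $|y|=3/2$, putting the integrals in the defining formula for $b_i^{(j)}$ on $\p B_1$ where the $L^2(\Sigma)$-normalization of $\psi_j$ applies directly, whereas you work at $|y|=1$ and compute the denominator $(2/3)^{2m_j+d-1}$ explicitly; both routes use the pointwise bound from Lemma \ref{lem-221227-0652} together with $\|\psi_j\|_{L^\infty}\lesssim\lambda_j^{d/4}$ and arrive at the same estimate. The tail bound for \eqref{eqn-221221-0701}, including the extraction of $(|x|/|y|)^{m_{N+1}}$ and the summability of $\sum_j \lambda_j^{d/2}(3/4)^{m_j}$ via $m_j\approx\sqrt{\lambda_j}$ and \eqref{eqn-230202-0546}, matches the paper's argument.
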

Here, \eqref{eqn-230315-1042} is not sharp in general, but as it is enough for later use we do not pursue more precise bounds.
\begin{proof}
	We only prove for $K_i$ and $b_i^{(j)}$ as the computation for $k$ and $b^{(j)}$ is almost identical. For \eqref{eqn-230315-1042}, by the scaling property \eqref{eqn-230518-1255} and  $\|\psi_j\|_{L^2} = 1$,
	\begin{align*}
		\left| b_i^{(j)} \left(y / |y|\right) \right|
		&=
		(3/2)^{-1+d+m_j} \left| b_i^{(j)} \left(3y/ (2|y|)\right) \right| \nonumber
		\\&=
		(3/2)^{-1+d+m_j} \left| \int_{\Gamma \cap \p B_1} K_i( w, 3y / (2|y|) ) \psi_j (w) \,dw \right| \nonumber
		\\&\leq
		(3/2)^{-1+d+m_j} C (1/2)^{1-d} \|\psi_j\|_{L^\infty} \nonumber
		\\&\leq
		C \lambda_j^{d/4} (3/2)^{m_j}. 
	\end{align*}
	Here we have also used the point-wise bound \eqref{eqn-221227-0835} and Lemma \ref{lem-221220-1021} with $q=d/4$. Next, we prove \eqref{eqn-221221-0701}:
	\begin{align*}
		|K_i(x,y) - K_i^{(N)} (x,y)| 
		&= 
		\left| \sum_{j=N+1}^\infty b_i^{(j)} \left( y / |y| \right) |y|^{1-d-m_j} |x|^{m_j} \psi_j \left( x/|x| \right) \right|
		\\&\leq
		\frac{|x|^{m_{N+1}}}{|y|^{m_{N+1}+d-1}} \sum_{j=N+1}^\infty \left| b_i^{(j)} \left( y / |y| \right) \right| \left( |x| / |y| \right)^{m_j - m_{N+1}} \| \psi_j \|_{L^\infty}
		\\&\leq
		C \frac{|x|^{m_{N+1}}}{|y|^{m_{N+1}+d-1}} \sum_{j=N+1}^\infty \left( 3 / 2 \right)^{m_j} \left(1/2\right)^{m_j - m_{N+1}} \lambda_j^{d/2}.
	\end{align*}
	Here in the last inequality, we used \eqref{eqn-230315-1042}, $|x| < |y|/2$, and Lemma \ref{lem-221220-1021}. Hence,
	\begin{align*}
		|K_i(x,y) - K_i^{(N)} (x,y)|
		&\leq
		C \frac{|x|^{m_{N+1}}}{|y|^{m_{N+1}+d-1}} \sum_{j=N+1}^\infty \left( 3/ 4 \right)^{m_j} \lambda_j^{d/2} \nonumber
		\\&\leq
		C \frac{|x|^{m_{N+1}}}{|y|^{m_{N+1}+d-1}} \sum_{j=N+1}^\infty \left( 3 / 4 \right)^{m_j - (d/2)\log_{4/3}(\lambda_j)} \nonumber
		\\& \leq
		C \frac{|x|^{m_{N+1}}}{|y|^{m_{N+1}+d-1}} \sum_{j=N+1}^\infty \left( 3 / 4 \right)^{ \sqrt{\lambda_j}/2 } 
		\\&\leq
		C \frac{|x|^{m_{N+1}}}{|y|^{m_{N+1}+d-1}} \sum_{j=1}^\infty \left( 3 / 4 \right)^{ \sqrt{\lambda_j}/2 }
		=
		C \frac{|x|^{m_{N+1}}}{|y|^{m_{N+1}+d-1}}. \nonumber
	\end{align*}
	Here we have also used \eqref{eqn-221018-0432} and \eqref{eqn-230202-0546}. The lemma is proved.
\end{proof}

\section{Proof of Theorem \ref{thm-221218-1120}} \label{sec-230703-0348}

In this section, we prove Theorem \ref{thm-221218-1120}. 
From Assumption \ref{ass-230119-0339}, we fix coordinates $x=(x',x_d)$ such that the tangent cone $\Gamma$ at the origin can be locally represented by $\{x_d > \Psi(x') \}$ for some $1$-homogeneous function $\Psi$. By \eqref{eqn-220728-0546}, we know that for sufficiently large $C$, locally 
\begin{equation} \label{eqn-230707-1136}
	\mathcal{C} := \{(x',x_d) : x_d > \Psi(x') + C|x'|^{1+\alpha}\} \subset \Omega.
\end{equation}
The following De Giorgi-type estimate plays a key role in our proof.
\begin{lemma} \label{lem-230119-0733}
	Assume $\Omega$ together with its tangent cone at the origin $\Gamma$ satisfy assumptions of Theorem \ref{thm-221218-1120}, and $\mathcal{C}$ be defined as above. Let $u\in H^1_{loc}$ satisfy
	\begin{equation}\label{eqn-220729-1048}
		\partial_i(a_{ij}\partial_j u) = 0 \,\, \text{in}\,\,\Omega\cap B_2,
		\quad
		u = 0 \,\, \text{on}\,\,\p\Omega\cap B_2,
	\end{equation}
where for some $\lambda, \Lambda>0$,
\begin{equation} \label{eqn-221221-0751-1}
	a_{ij}\xi_i\xi_j \geq \lambda |\xi|^2, \forall \xi \in \bR^d\setminus \{0\},
	\quad
	|a_{ij}| \leq \Lambda.
\end{equation}
Then there exists a constant $\alpha_0 \in (0,1)$, such that for all small enough $r>0$,
	\begin{equation*} 
		\sup_{B_r \cap (\Omega \setminus \mathcal{C})} |u| \leq C r^{\alpha \alpha_0} \sup_{B_{2r}\cap \Omega} |u|.
	\end{equation*}
\end{lemma}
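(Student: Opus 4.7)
The plan is to reduce the estimate to a standard boundary H\"older regularity statement applied at the fine scale $r^{1+\alpha}$. The two ingredients needed are: (i) a geometric observation that every $x \in B_r \cap (\Omega \setminus \mathcal{C})$ satisfies $\dist(x, \p\Omega) \lesssim r^{1+\alpha}$, and (ii) a uniform De Giorgi--type boundary H\"older estimate for solutions of \eqref{eqn-220729-1048}, valid down to that scale through a uniform exterior Lebesgue density condition inherited from the Lipschitz cone $\Gamma$.

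\textbf{Geometric step.} For $x = (x',x_d) \in B_r \cap (\Omega \setminus \mathcal{C})$, the definition of $\mathcal{C}$ in \eqref{eqn-230707-1136} gives the upper bound $x_d \leq \Psi(x') + C|x'|^{1+\alpha}$. A matching lower bound $x_d \geq \Psi(x') - Cr^{1+\alpha}$ follows from $\mathcal{C} \subset \Omega$ together with the $\alpha$-conical hypothesis \eqref{eqn-220728-0546}, which places $\p\Omega$ within $Cr^{1+\alpha}$ of $\p\Gamma$ in $B_{2r}$. Hence $\dist(x, \p\Gamma) \leq Cr^{1+\alpha}$, and the Hausdorff proximity of $\p\Omega$ to $\p\Gamma$ upgrades this to $\dist(x, \p\Omega) \leq Cr^{1+\alpha}$. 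I then pick $x_0 \in \p\Omega$ realizing this distance.

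\textbf{Exterior density and H\"older iteration.} The Lipschitz graph structure of $\Gamma$ (Assumption \ref{ass-230707-1023}) yields the uniform Lebesgue density $|B_\rho(y_0) \setminus \Gamma| \geq \theta \rho^d$ for every $y_0 \in \p\Gamma$ and $\rho > 0$. Pairing $x_0 \in \p\Omega \cap B_r$ with a nearby $y_0 \in \p\Gamma$ and using the $Cr^{1+\alpha}$-Hausdorff closeness of the boundaries transfers this density to $\Omega^c$ at all scales $\rho \in [C_1 r^{1+\alpha}, r_0]$, uniformly in $r$. Standard De Giorgi oscillation decay for divergence-form solutions with coefficients satisfying \eqref{eqn-221221-0751-1} then yields a fixed contraction $\mathrm{osc}_{B_{\rho/2}(x_0) \cap \Omega} u \leq (1 - c_0)\, \mathrm{osc}_{B_\rho(x_0) \cap \Omega} u$ at each admissible $\rho$, with $c_0 = c_0(d, \lambda, \Lambda, \theta) \in (0,1)$. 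Iterating approximately $\alpha \log_2(1/r)$ times from $\rho = r$ down to $\rho = C_1 r^{1+\alpha}$, and using $u|_{\p\Omega} \equiv 0$ to control $\sup|u|$ by $\mathrm{osc}\,u$, gives
\[
    \sup_{B_{C_1 r^{1+\alpha}}(x_0) \cap \Omega} |u| \leq C r^{\alpha \alpha_0} \sup_{B_{2r} \cap \Omega} |u|, \qquad \alpha_0 := -\log_2(1 - c_0),
\]
and shrinking $c_0$ if necessary we may arrange $\alpha_0 \in (0,1)$. Combined with the geometric step this proves the lemma.

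\textbf{Main obstacle.} The principal technical point is the density transfer at the borderline scale $\rho \asymp r^{1+\alpha}$: one must verify that the lower bound on $|B_\rho(x_0) \setminus \Omega|$ has a constant depending only on the Lipschitz character of $\Gamma$, and is in particular independent of $r$. The subsequent H\"older iteration and the matching of $x$ with $\p\Omega$ are otherwise routine consequences of the De Giorgi--Nash--Moser theory.
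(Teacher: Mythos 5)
Your proposal is correct and follows essentially the same route as the paper: show that any $x \in B_r \cap (\Omega \setminus \mathcal{C})$ is within distance $\lesssim r^{1+\alpha}$ of $\p\Omega$, establish a uniform exterior Lebesgue density for $\Omega^c$ at all scales in $[C_1 r^{1+\alpha}, r/2]$ using the Lipschitz cone and the $\alpha$-conical closeness of $\p\Omega$ to $\p\Gamma$, and then run the De Giorgi oscillation decay over those $\approx \alpha \log_2(1/r)$ dyadic scales (the paper invokes \cite[Theorem~8.27]{MR1814364} with the exterior cone condition replaced by the exterior measure condition, which is the same iteration you spell out). The only cosmetic difference is that you center the balls at a chosen boundary point $x_0$ whereas the paper centers at $x$ itself; both are equivalent up to constants.
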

\begin{proof}
	Fix a point $x \in B_r \cap (\Omega \setminus \mathcal{C})$.
	From the definition of $\mathcal{C}$ and \eqref{eqn-220728-0546}, there exists a constant $C_1>0$, such that for all sufficiently small $r$, we have $\dist(\p\Omega\cap B_r, \p \mathcal{C} \cap B_r) \leq C_1 r^{1+\alpha}$ and
	\begin{equation} \label{eqn-230119-0728}
		|B_s (x) \cap \Omega^c| \geq c_0 |B_s| \quad 
		\forall  s \in (10 C_1 r^{1+\alpha}, r/2).
	\end{equation}
	Now, from $u=0$ on $\p\Omega$, $B_{10 C_1 r^{1+\alpha}}(x) \cap \p\Omega \neq \emptyset$, and the De Giorgi improvement of oscillation lemma, for some $\alpha_0\in(0,1)$,
	\begin{equation*}
		|u(x) - 0| \leq \operatorname{OSC}_{\Omega \cap B_{10 C_1 r^{1+\alpha}} (x) }(u) \leq C \left(\frac{10 C_1 r^{1+\alpha}}{r/2}\right)^{\alpha_0} \sup_{B_{r/2}(x)} |u| \leq C r^{\alpha \alpha_0} \sup_{B_{2r}} |u|.
	\end{equation*}
	See for instance \cite[Theorem~8.27]{MR1814364}. Note that in the proof of \cite[Theorem~8.27]{MR1814364}, the exterior cone condition can be replaced by the exterior measure condition. Moreover, here we only need the estimate up to the scale $C_1 r^{1+\alpha}$ instead of zero, and at these scales the exterior measure condition is true due to \eqref{eqn-230119-0728}.
\end{proof}

\begin{proposition} \label{prop-221228-0952}
	Let $\Omega$ and $\Gamma$ satisfy assumptions of Theorem \ref{thm-221218-1120}, and in addition, for some small $R>0$, $\Gamma \cap B_R \subset \Omega \cap B_R$. Let $m_N < m_{N+1}$ be two distinct characteristic constants of $\Gamma$. Suppose that $u \in H^1_{loc}$ satisfies \eqref{eqn-220729-1048} with coefficients satisfying \eqref{eqn-221221-0751-1} and
	\begin{equation} \label{eqn-221221-0751}
		|a_{ij}(x) - \delta_{ij}| = O(|x|^\beta),\quad \text{as} \,\, x \rightarrow 0
	\end{equation}
	for some $\beta > 0$. 
	Then, if $u(x) = O(|x|^{\mu})$ for some $\mu \in [m_N, m_{N+1})$, we have
	\begin{equation} \label{eqn-230709-0856}
		u (x) = C |x|^{m_N} \psi_N \left( \frac{x}{|x|} \right) + w(x)\,\,\text{in}\,\,\Gamma,
		\,\,
		\text{with}
		\,\,
		\left( \fint_{B_r \cap \Gamma} |w|^p \right)^{\frac{1}{p}} = O(r^{\min\{ \mu + \alpha_0 \min \{\alpha,\beta\}, m_{N+1} \} }),
	\end{equation}
	where $p> 2d/(d-2)$ and $\alpha_0 \in (0,1)$ are constants depending only on $(\Omega, d, \lambda, \Lambda)$. Furthermore, when $\mu \in (m_N, m_{N+1})$, we must have $C \equiv 0$.
\end{proposition}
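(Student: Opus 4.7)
The plan is to represent $u$ on the inscribed cone $\Gamma$ using the Green's function of $\Gamma$, expand the kernels via the machinery of Section~\ref{sec-230703-0347}, read off each $|x|^{m_j}\psi_j$-mode of $u$ explicitly, and control the remainder via Lemma~\ref{lem-221227-0652}, \eqref{eqn-221221-0701}, Lemma~\ref{lem-230119-0733}, and the H\"older smallness \eqref{eqn-221221-0751}.

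First, fix $\eta \in C^\infty_c(B_R)$ with $\eta \equiv 1$ on $B_{R/2}$ and set $F := \eta u$. Since $\Gamma \cap B_R \subset \Omega$, rewriting $a_{ij} = \delta_{ij} + (a_{ij}-\delta_{ij})$ shows that $F$ satisfies
\[
\Delta F \;=\; \partial_i \widetilde g_i + h \quad \text{in }\Gamma,
\]
where $\widetilde g_i = -(a_{ij}-\delta_{ij})\partial_j F + u\,a_{ij}\partial_j\eta$ and $h = a_{ij}\partial_i\eta\,\partial_j u$. The principal piece $-(a_{ij}-\delta_{ij})\partial_j F$ is supported in $B_R$ with pointwise size $O(|y|^\beta|\nabla F|)$, while all cutoff-annulus pieces are supported in $R/2<|y|<R$ and produce smooth, separately expandable contributions for $|x|<R/4$. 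Because $F$ has compact support, the Green's identity on the cone yields
\[
F(x) \;=\; \int_\Gamma G(x,y)h(y)\,dy - \int_\Gamma K_i(x,y)\widetilde g_i(y)\,dy + \int_{\p\Gamma} k(x,y)F(y)\,d\sigma_y.
\]

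Second, for $|x|\le r \ll R$ I would split each volume integral along $\{|y|\le 2|x|\}$ versus $\{|y|>2|x|\}$, and on the far region substitute $K_i = K_i^{(N)} + (K_i - K_i^{(N)})$, $k = k^{(N)} + (k - k^{(N)})$ from \eqref{eqn-230626-1015}, together with the analogous expansion of $G$. The $K_i^{(N)}, k^{(N)}, G^{(N)}$ contributions produce an honest finite sum $\sum_{j\le N} c_j |x|^{m_j}\psi_j(x/|x|)$, where $c_j$ is an explicit integral of $\widetilde g_i, h$, and $F|_{\p\Gamma}$ against the homogeneous coefficients $b_i^{(j)}, b^{(j)}$. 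The kernel tails are $O(|x|^{m_{N+1}})$ by \eqref{eqn-221221-0701}; the near-diagonal region is handled via the pointwise bounds of Lemma~\ref{lem-221227-0652}; and the (smooth) annulus piece is absorbed into the $c_j$ after a Taylor expansion up to order $m_N$. This produces an identity
\[
u(x) = \sum_{j\le N} c_j |x|^{m_j}\psi_j(x/|x|) + w(x), \qquad x \in \Gamma \cap B_{R/4}.
\]
To bound $w$ I would use: (i) a Caccioppoli/Sobolev bootstrap from $u = O(|y|^\mu)$ giving $\int_{B_{2s}\setminus B_s}|\nabla F|^2 \lesssim s^{d-2+2\mu}$ and hence $L^p$-control of $\nabla F$ for $p$ slightly larger than $2d/(d-2)$; (ii) Lemma~\ref{lem-230119-0733}, which, because $\p\Gamma \cap B_R \subset \Omega \setminus \mathcal{C}$, yields $|u(y)| \lesssim |y|^{\mu+\alpha\alpha_0}$ on $\p\Gamma$; (iii) \eqref{eqn-221221-0751}, giving $|\widetilde g_i(y)| \lesssim |y|^\beta|\nabla F(y)|$; (iv) the summable eigenfunction decay \eqref{eqn-230202-0546}. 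Integrating these estimates against the kernel bounds yields $(\fint_{B_r\cap\Gamma}|w|^p)^{1/p} \le C r^{\min\{\mu + \alpha_0\min\{\alpha,\beta\},\, m_{N+1}\}}$.

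Third, since $m_j < \mu$ for $j<N$, the term $c_j|x|^{m_j}\psi_j(x/|x|)$ cannot be absorbed into the $O(|x|^\mu)$ growth of $u$ unless $c_j = 0$, so the sum collapses to its $j=N$ term with $C:=c_N$; if in addition $\mu \in (m_N, m_{N+1})$, the same argument applied at level $m_N$ forces $c_N = 0$. The main obstacle I anticipate is the bookkeeping inside the remainder estimate: $\widetilde g_i$ depends on $\nabla F$, so one needs a careful Caccioppoli/Sobolev bootstrap to convert the pointwise hypothesis $u = O(|y|^\mu)$ into $L^p$-control of $\nabla F$ with the right exponent and annular decay; and one must ensure that the boundary contribution (driven by Lemma~\ref{lem-230119-0733}) interlocks cleanly with the interior contribution (driven by \eqref{eqn-221221-0751}) to produce the single sharp rate $\alpha_0\min\{\alpha,\beta\}$, rather than two different rates.
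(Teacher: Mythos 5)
Your proposal is correct and follows essentially the same strategy as the paper: represent $u$ on the inscribed cone $\Gamma$ via Green's potentials, split the kernels into finite partial sums $K_i^{(N)}, k^{(N)}$ plus tails (estimated via \eqref{eqn-221221-0701} and the eigenfunction bounds of Lemma~\ref{lem-221220-1021}), handle the near-diagonal region with the pointwise kernel bounds of Lemma~\ref{lem-221227-0652}, feed in the De Giorgi boundary estimate Lemma~\ref{lem-230119-0733} and the H\"older coefficient bound \eqref{eqn-221221-0751}, and kill the low modes by comparing against the hypothesis $u=O(|x|^\mu)$. The one organizational difference is that you apply Green's identity directly to $F=\eta u$ (producing an extra $G$-term supported in the cutoff annulus, which you correctly note is harmonic in $x$ near the vertex and hence separately expandable in eigenmodes, not merely by Taylor series), whereas the paper instead constructs the potential $v$ out of $K_i$ and $k$ alone with data $f_i=\eta(\delta_{ij}-a_{ij})\partial_j u$, $h=\eta u$, and observes that $u-v$ is then harmonic on $\Gamma\cap B_{R/2}$ with zero boundary data, so that the annulus effects are absorbed into the eigenmode expansion of $u-v$ rather than into an explicit $G$-integral. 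The two bookkeepings are equivalent and the remainder estimates (Caccioppoli + reverse H\"older to get $L^{2+\varepsilon}$ control of $\nabla u$ on dyadic annuli, then Young's inequality for the near-diagonal convolution to reach $L^p$ with $p>2d/(d-2)$) proceed identically in either framing.
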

Assuming Proposition \ref{prop-221228-0952}, we prove Theorem \ref{thm-221218-1120}
\begin{proof}[Proof of Theorem \ref{thm-221218-1120}]
	Clearly we only need to consider the case of $\lim_{r\rightarrow 0} N_u < \infty$. We claim that there exists a characteristic constant $m_N$ such that
	\begin{equation} \label{eqn-230719-1247}
		|u (x)| = O(|x|^{m_N}) \quad \text{and} \quad |u (x)| \neq O(|x|^{m_{N+1}}).
	\end{equation}
	Indeed, if $N_u(r) \leq M$, we have that $\fint_{B_{2^{-k}}}u^2 \geq c M^{-k}$ from Lemma \ref{lem-230624-1045}, and so $\|u\|_{L^\infty(B_r)}^2 \geq \fint_{B_{r}}u^2 \geq c r^\beta$ for some $\beta > 0$. This shows that $|u| \neq  O(|x|^\mu)$ for some $\mu < \infty$. We still need to show $u = O(|x|^{m_1})$, which can be done by constructing a barrier function. By \eqref{eqn-220728-0546}, we know that for sufficiently large $C$, locally 
	\begin{equation*}
		\Omega \subset \mathcal{C}_1 := \{(x',x_d) : x_d > \Psi(x') - C|x'|^{1+\alpha}\}.
	\end{equation*}
Now for sufficiently small $R>0$, let $u_1 \in H^1_{loc}$ be the solution to
\begin{equation*}
	\Delta u_1 = 0\,\,\text{in}\,\, \mathcal{C}_1 \cap B_R,
	\quad u_1 = 0 \,\,\text{on}\,\,\p \mathcal{C}_1 \cap B_R,
	\quad
	u_1 = 1 \,\,\text{on}\,\, \mathcal{C}_1 \cap \p B_R.
\end{equation*}
	Now, $\mathcal{C}_1$ is a ``$C^{1,\alpha}$ perturbation'' of the cone $\Gamma$, which verifies the assumptions in \cite[Theorem~1.1]{MR3109767}. Hence, for some non-trivial homogeneous harmonic function $P_{m_1}$ of degree $m_1$, we have $(0<) u_1 = P_{m_1}(x) + o(|x|^{m_1})$, noting that $m_1$ is the characteristic constant associated with the leading eigenvalue of the tangent cone $\Gamma$. Now, since $\Omega \cap B_R \subset \mathcal{C}_1 \cap B_R$ by choosing $R$ small enough, by comparison, we have $|u| \leq u_1 = O(|x|^{m_1})$. Combining these, we have proved \eqref{eqn-230719-1247}.

Now we prove \eqref{eqn-230709-0856}. Take a $C^{1,\alpha}$ change of variables
\begin{equation*}
	(\widetilde{x}', \widetilde{x}_d) = \Phi (x',x_d) = (x', x_d - C|x'|^{1+\alpha}),
\end{equation*}
	where $C$ is the number given in \eqref{eqn-230707-1136}. It is easy to see that $\widetilde{\Omega} := \Phi (\Omega)$ is still $\alpha$-conical with the tangent cone $\widetilde{\Gamma} := \Phi (\mathcal{C})$, which satisfies Assumption \ref{ass-230119-0339}. More importantly, now the tangent cone $\widetilde{\Gamma}$ is locally contained in $\widetilde{\Omega}$. 
	
	In new coordinates, $\widetilde{u} := u \circ \Phi^{-1}$ satisfies \eqref{eqn-220729-1048} locally on $\widetilde{\Omega}$, with coefficients verifying all the conditions in Proposition \ref{prop-221228-0952}. Moreover, we still have $\widetilde{u} = O(|\widetilde{x}|^{m_N})$ since $\Phi$ is locally a diffeomorphism. Applying Proposition \ref{prop-221228-0952} with $\mu = m_N$ to $\widetilde{u}$ on $\widetilde{\Omega}$, we obtain a homogeneous harmonic function $P (\widetilde{x}) = C_1 |\widetilde{x}|^{m_N} \psi_N (\widetilde{x} / |\widetilde{x}|)$ on $\widetilde{\Gamma}$, such that 
	\begin{equation} \label{eqn-230709-1149}
		\left( \fint_{\widetilde{\Gamma} \cap B_r} |\widetilde{u}(\widetilde{x}) - P (\widetilde{x}) |^p \,d\widetilde{x}\right)^{1/p}  \leq C r^{\min\{ m_N + \alpha_0 \alpha, m_{N+1} \} }.
	\end{equation} 
	We first show the leading term $P \not\equiv 0$, i.e., $C_1 \neq 0$.
	Suppose the contrary that $C_1=0$. Again from the fact that $\Phi$ is locally a diffeomorphism, by Lemma \ref{lem-230119-0733}, we have 
	\begin{equation*}
		\| \widetilde{u} \|_{L^\infty (B_r \cap (\widetilde\Omega \setminus \widetilde{\Gamma}))} 
		\leq 
		C \|u\|_{L^\infty (B_{C r} \cap (\Omega \setminus \mathcal{C}))}
		\leq
		C r^{\alpha \alpha_0} \sup_{B_{2Cr}} |u|
		\leq
		C r^{m_N + \alpha \alpha_0}.
	\end{equation*}
	Combining this, \eqref{eqn-230709-1149}, and a local maximum principle, we reach $|u(x)| = O(|x|^{\mu_1})$, where $\mu_1 := \min\{ m_N + \alpha_0 \alpha, m_{N+1} \}$. If $\mu_1 = m_{N+1}$, this is a contradiction. Otherwise, repeating the above procedure, but now applying Proposition \ref{prop-221228-0952} with $\mu = \mu_1$, we can further improve the vanishing order of $u$, and in finitely many steps reach $|u| = O(|x|^{m_{N+1}})$, which is again a contradiction. Hence, $C_1 \neq 0$.
	
	Set $w(x) = u(x) - P(x)$, where $u$ and $P$ are extended by zero outside $\Omega$ and $\Gamma$, respectively. We are left to show $\|w\|_{L^\infty (B_r)} \leq C r^{\min\{ m_N + \alpha_0 \alpha, m_{N+1} \} }$.	
	Note that both $u$ and $P$ satisfy assumptions of Lemma \ref{lem-230119-0733} (for $P$, we take $\Omega = \Gamma$). Hence, $\sup_{B_r \setminus \mathcal{C}} |w| \leq \sup_{B_r \setminus \mathcal{C}} ( |u| + |P|) \leq C r^{m_N + \alpha \alpha_0}$. Here, we also used $u=0$ on $B_r \setminus \Omega$ and $P=0$ on $B_r \setminus \Gamma$. To bound $w$ on $B_r \cap \mathcal{C}$, we transforming \eqref{eqn-230709-1149} back to $x$-coordinates and apply the triangle inequality. This gives
	\begin{align*}
		&\left( \fint_{\mathcal{C} \cap B_r} | u (x) - P (x) |^p \, dx \right)^{1/p}
		\\&\quad\leq
		\left( \fint_{\mathcal{C} \cap B_r} | u (x) - P \circ \Phi (x) |^p \, dx \right)^{1/p}
		+ 
		\left( \fint_{\mathcal{C} \cap B_r} | P(x) - P \circ \Phi (x) |^p \, dx \right)^{1/p}
		\\&\quad\leq
		C \left( \fint_{\widetilde{\Gamma} \cap B_{C r}} | u\circ \Phi^{-1} - P |^p \right)^{1/p}
		+
		\sup_{B_r \cap \mathcal{C}} ( |x-\Phi(x)| |\nabla \Phi| )
		\leq 
		C r^{\min\{ m_N + \alpha_0 \alpha, m_{N+1} \} }.
	\end{align*}
	Here, we also used $|x - \Phi(x)| \leq C|x|^{1+\alpha}$ and $|\nabla P| = O(|x|^{m_N - 1})$. Now, note that
	\begin{equation*}
		\Delta w = \Delta (u - P) = 0\,\,\text{in}\,\,B_1 \cap \mathcal{C},\quad |w| = |u-P| \leq C |x|^{m_N + \alpha \alpha_0}\,\,\text{on} \,\, B_1 \cap \p \mathcal{C}.
	\end{equation*}
	We apply a local maximum principle to $(w - C r^{m_N + \alpha \alpha_0})_+$ on $B_{2r} \cap \mathcal{C}$, which is subharmonic in $B_{2r} \cap \mathcal{C}$ and vanishes on $B_{2r} \cap \p \mathcal{C}$ by choosing $C$ large enough, to obtain
	\begin{equation*}
		\| (w - C r^{m_N + \alpha \alpha_0})_+ \|_{L^\infty(B_{r} \cap \mathcal{C})} \leq C r^{d/2} \| (w - C r^{m_N + \alpha \alpha_0})_+ \|_{L^2(B_{r} \cap \mathcal{C})} \leq C r^{m_N + \alpha \alpha_0}.
	\end{equation*}
	Similarly, we can bound $\|(-w - C r^{m_N + \alpha \alpha_0})_+\|_{L^\infty (B_r \cap \mathcal{C})}$, and hence, $\|w\|_{L^\infty (B_r \cap \mathcal{C})} \leq C r^{m_N + \alpha \alpha_0}$. Combining above, we reach the desired remainder estimate.
\end{proof}
The rest of Section \ref{sec-230703-0348} will be devoted to the proof of Proposition \ref{prop-221228-0952}.

\subsection{A representation by Green's function} 

Let $\eta \in C^\infty_c (B_R)$ be a usual cut-off function with $\eta = 1$ on $B_{R/2}$. Let 
\begin{align*}
	v 
	:=
	- \underbrace{\int_{\Gamma} K_i(x,y) f_i (y)\,dy}_{I} + \underbrace{\int_{\p \Gamma} k( x, y) h( y )\,d\sigma_{y}}_{II},
\end{align*}
where $f_i = \eta (\delta_{ij} - a_{ij}) \p_{x_j} u$, $h = u \eta$, and $K_i$ and $k$ are kernels defined in Section \ref{sec-221228-1031}. Now, $u-v$ satisfies
\begin{equation*}
	\begin{cases}
		\Delta (u-v) = 0 \quad \text{in}\,\, \Gamma \cap B_{R/2},\\
		u-v = 0 \quad \text{on} \,\, \p \Gamma \cap B_{R/2}.
	\end{cases}
\end{equation*}
Hence, we have the full series expansion
\begin{equation*}
	u - v = \sum_{j=1}^\infty C_j |x|^{m_j} \psi_j \left( x/|x| \right) = \sum_{j=1}^N C_j |x|^{m_j} \psi_j \left( x/|x| \right) + O(|x|^{m_{N+1}}).
\end{equation*}
Next, we expand $v$. First, from Lemma \ref{lem-230119-0733}, we have
\begin{equation} \label{eqn-221229-1157}
	\sup_{B_r \cap \p \Gamma} |h| = O(r^{\mu + \alpha \alpha_0}) .
\end{equation}
By \eqref{eqn-221221-0751},
\begin{equation} \label{eqn-221222-1006}
	|f_i| = \eta | \delta_{ij} - a_{ij} | |\p_{x_j} u| \leq C |x|^\beta |\nabla u|.
\end{equation}

\subsection{Estimating the term \texorpdfstring{$I$}{I}} 

Recall the definition for $K_i^{N}$ in \eqref{eqn-230626-1015}. We split
\begin{align*}
	I 
	&= 
	\underbrace{\sum_{i=1}^{d} \int_{\Gamma} K_i^{(N)} (x,y) f_i (y) \,dy}_{I_1}
	-
	\underbrace{\sum_{i=1}^{d} \int_{\Gamma \cap B_{2|x|}} K_i^{(N)} (x,y) f_i (y) \,dy}_{I_2}
	\\&\quad
	+
	\underbrace{\sum_{i=1}^{d} \int_{\Gamma \cap B_{2|x|}^c} (K_i - K_i^{(N)}) (x,y) f_i (y) \,dy}_{I_3}
	+
	\underbrace{\sum_{i=1}^d \int_{\Gamma \cap B_{2|x|}} K_i(x,y) f_i (y) \,dy}_{I_4} .
\end{align*}
In the following,we prove that $I_1$ is a finite combination of homogeneous harmonic functions with degree up to $m_N$, and the rest three terms $I_2,I_3$, and $I_4$ are of higher order.

\noindent\textbf{Convergence and expansion of $I_1$}: for each $j=1,\ldots,N$ and $i=1,\ldots,d$, from \eqref{eqn-230315-1042} and \eqref{eqn-221222-1006},
\begin{equation*}
	\left| b_i^{(j)} \left( y/|y| \right) \right| |y|^{1-d-m_j} |f(y)| \leq C |y|^{1-d-m_j + \beta} |D u (y)|,
\end{equation*}
where $C$ is a constant that could depend on $m_j, \lambda_j$. Hence,
\begin{align}
		\int_{\Gamma \cap B_R} \left| b_i^{(j)}  \left( \frac{y}{|y|} \right) \right| |y|^{1-d-m_j} |f| \, dy
		&\leq
		C \sum_{l=0}^\infty (2^{-l}R)^{1-d-m_j + \beta} (2^{-l}R)^d \fint_{\Gamma \cap (B_{2^{-l}R} \setminus B_{2^{-l-1}R})} |D u| \nonumber
		\\&\leq 
		C \sum_{l=0}^\infty (2^{-l}R)^{1-m_j + \beta} \left( \fint_{\Gamma \cap (B_{2^{-l}R} \setminus B_{2^{-l-1}R})} |D u|^2 \right)^{1/2}. \label{eqn-230626-1019}
\end{align}
By the Caccioppoli inequality, $u= O(|x|^\mu)$, and $m_j \leq \mu$, we can further compute
\begin{equation} \label{eqn-230214-0103}
	\begin{split}
		\text{RHS of \eqref{eqn-230626-1019}}
		&\leq
		C \sum_{l=0}^\infty (2^{-l}R)^{1-m_j + \beta} (2^{-l}R)^{-1} \| u \|_{L^\infty(\Gamma \cap B_{2^{-l+1}R})}
		\\&\leq
		C \sum_{l=0}^\infty (2^{-l}R)^{-m_j + \beta} (2^{-l+1}R)^{ \mu }
		\\&\leq
		C R^{\mu -m_j + \beta}\sum_{l=0}^\infty 2^{-l\beta} 2^{-l(\mu - m_j)} 
		\\&\leq 
		C R^{m_N-m_j + \epsi + \beta} \sum_{l=0}^\infty 2^{-l\beta} \leq C R^{\mu -m_j + \beta} . 
	\end{split}
\end{equation}
This proves that the integrands in $I_1$ are in $L^1$. Hence,
\begin{align*}
	I_1
	&= 
	\sum_{i=1}^d \sum_{j=1}^{N} \int_\Gamma b_i^{(j)} \left( \frac{y}{|y|} \right) |y|^{1-d-m_j} |x|^{m_j} \psi_j \left( \frac{x}{|x|} \right) f_i(y) \,dy
	\\&=
	\sum_{j=1}^{N} |x|^{m_j} \psi_j \left( \frac{x}{|x|} \right) \left(  \sum_{i=1}^d  \int_\Gamma b_i^{(j)} \left( \frac{y}{|y|} \right) |y|^{1-d-m_j} f_i(y)\,dy \right)
\end{align*}
is a combination of homogeneous harmonic functions of degree up to $m_N$.

\noindent\textbf{Smallness of $I_2$}.
From Lemma \ref{lem-221220-1021} and \eqref{eqn-230626-1019}-\eqref{eqn-230214-0103} with $R$ replaced by $2|x|$, we have
\begin{equation*}
	\begin{split}
		| I_2 |
		&\leq
		C \sum_{j=1}^N |x|^{m_j} \left| \psi_j \right| \int_{\Gamma \cap B_{2 |x|}} \left| b_i^{(j)} \right| \left( y/|y| \right) |y|^{1-d-m_j} |f| \, dy
		\\&\leq
		C \sum_{j=1}^N |x|^{m_j} \lambda_j^{d/4} (2|x|)^{\mu - m_j  + \beta}
		\leq
		C |x|^{\mu + \beta}.
	\end{split}
\end{equation*}

\noindent\textbf{Smallness of $I_3$}. 
By \eqref{eqn-221221-0701} and \eqref{eqn-221222-1006},
\begin{align}
	|I_{3}|
	&\leq 
	C |x|^{m_{N+1}} \int_{\Gamma \cap \{2|x| \leq |y| < R\} } |y|^{1-d-m_{N+1} + \beta} |D u(y)|\,dy \nonumber
	\\&\leq
	C |x|^{m_{N+1}} \sum_{l=1}^{\log_2(R/|x|) } (2^l |x|)^{1-d-m_{N+1} + \beta} (2^l |x|)^d \fint_{\Gamma \cap (B_{2^{l+1} |x|} \setminus B_{2^l |x|}) } |D u(y)|\,dy \nonumber
	\\&\leq
	C |x|^{m_{N+1}} \sum_{l=1}^{\log_2(R/|x|) } (2^l |x|)^{1-m_{N+1} + \beta} \left( \fint_{\Gamma \cap (B_{2^{l+1} |x|} \setminus B_{2^l |x|}) } |D u(y)|^2 \,dy \right)^{1/2}. \label{eqn-230626-1046}
\end{align}
Again by the Caccioppoli inequality and $u=O(|x|^\mu)$, we obtain
\begin{align*}
	\text{RHS of \eqref{eqn-230626-1046}}
	&\leq
	C |x|^{m_{N+1}} \sum_{l=1}^{\log_2(R/|x|) } (2^l |x|)^{1-m_{N+1} + \beta} ( 2^{l}|x| )^{-1} \|u\|_{L^\infty (\Gamma \cap (B_{2^{l+1} |x|} \setminus B_{2^l |x|}))}
	\\&\leq
	C |x|^{m_{N+1}} \sum_{l=1}^{\log_2(R/|x|) } (2^l |x|)^{-m_{N+1} + \beta} (2^l |x|)^{ \mu }
	\\&\leq
	C |x|^{\mu + \beta} \sum_{l=1}^{\log_2(R/|x|) } 2^{l(\mu - m_{N+1}  + \beta)}
	\\&\leq
	C |x|^{\mu + \beta} \left( 2^{\log_2(R/|x|) ( \mu - m_{N+1} + \beta)} + 1\right) = C R^{\mu - m_{N+1}  + \beta} |x|^{m_{N+1}} + C|x|^{\mu + \beta}.
\end{align*}

\noindent\textbf{Smallness of $I_4$}. Note that the kernel $K_i(x,y)$ has two singular points for $y \in B_{2|x|}$: at $y=x$ and  $y=0$. This motivates us to split
\begin{equation*}
	I_4
	= 
	\int_{\Gamma \cap (B_{2|x|}(x) \setminus B_{|x|/10}(x))} K_i(x,y)f_i(y) \,dy + \int_{\Gamma \cap B_{|x|/10}(x)} K_i(x,y)f_i(y) \,dy
	=:
	I_{41} + I_{42}.
\end{equation*}

For $I_{41}$, noting \eqref{eqn-221227-0835}, \eqref{eqn-221222-1006}, and $|x|\approx |y| \approx |x-y|$ for all $y \in B_{2|x|}(x) \setminus B_{|x|/10}(x)$,
\begin{align*}
	| I_{41} |
	&\leq
	C \int_{\Gamma \cap (B_{2|x|}(x) \setminus B_{|x|/10}(x))} (|x-y|^{-1} + |y|^{-1}) |x-y|^{2-d} |y|^\beta |Du(y)|\,dy
	\\&\leq
	C |x|^{1-d+\beta} \int_{B_{2|x|}(x)} |Du(y)| \,dy
	\\&\leq
	C |x|^{1-d+\beta} |x|^d (\fint_{B_{2|x|}(x)} |Du|^2)^{1/2}
	\\&\leq
	C |x|^{1-d+\beta} |x|^d |x|^{-1} \|u\|_{L^\infty B_{4|x|}(x)}
	\leq
	C |x|^{\beta + \mu}.
\end{align*}
Here in the last line, we also used the Caccioppoli inequality and $u = O(|x|^\mu)$.
%
%

For $I_{42}$, by \eqref{eqn-221227-0835}, \eqref{eqn-221222-1006}, and $|y| \approx |x| \geq 10 |x-y|$,
%
\begin{equation*}
	|I_{42}| 
	\leq 
	C \int_{\Gamma \cap B_{|x|/10}(x)} \frac{1}{|x-y|^{d-1}} |y|^\beta |Du(y)| \,dy
	\leq
	C |x|^\beta \int_{\Gamma \cap B_{2|x|}} \frac{1}{|x-y|^{d-1}} |Du(y)| \,dy.
\end{equation*}
Hence, for $|x| \leq r$,
\begin{align*}
	| I_{42} |
	&\leq
	C r^\beta \left| \int_{\Gamma \cap B_{2|x|}} \frac{1}{|x-y|^{d-1}} |Du(y)| \,dy \right| \nonumber
	\\&=
	C r^\beta \left| \int_{\Gamma \cap B_{3r}(x) \cap B_{2r}(0)} \frac{1}{|x-y|^{d-1}} |Du(y)| \,dy \right| \nonumber
	\\&\leq
	C r^\beta \left| \left( \left( |z|^{-d+1} 1_{B_{3r}} \right) \ast \left( |Du|1_{B_{2r} \cap \Gamma}\right) \right) (x) \right|. 
\end{align*}
By Young's inequality for convolution, a reverse H\"older's inequality for $Du$ (cf. \cite{MR3099262}), the Caccioppoli inequality, and $u = O(|x|^\mu)$, we obtain that for some $q<d/(d-1)$ to be fixed later, a small $\epsi>0$ coming from reverse H\"older's inequality, and $p$ satisfying $1 + 1/p = 1/q + 1/(2+\epsi)$,
\begin{align*}
	\|I_{42}\|_{L^p (B_r \cap \Gamma)}
	&\leq
	C r^\beta \left\| 1_{B_{3r}}|z|^{-d+1} \right\|_{L^q} \left\| |Du|1_{B_{2r} \cap \Gamma} \right\|_{L^{2+\epsi}} 
	\\&\leq
	C r^\beta r^{-d+1+d/q} r^{d/(2+\epsi)} \left( \fint_{B_{4r}\cap \Omega} |Du|^2 \right)^{1/2}
	\\&\leq
	C r^\beta r^{-d+1+d/q} r^{d/(2+\epsi)} r^{-1} \| u \|_{L^\infty (\Gamma \cap B_{8r})}
	\leq
	C r^{-d + d/q + d/(2+\epsi)} r^{\mu + \beta}.
\end{align*}
That is,
\begin{equation*}
	\left( \fint_{\Gamma \cap B_r} |I_{42}|^p \right)^{1/p} \leq C r^{\mu + \beta}.
\end{equation*}
Finally, we can make $p > 2d/(d-2)$ by choosing $q$ to be sufficiently close to $d/(d-1)$.

\subsection{Estimating the term \texorpdfstring{$II$}{II}} 

Similar to the treatment of $I$, we split
\begin{align*}
	II 
	&=
	\int_{\p\Gamma} k^{(N)}(x,y) h(y) \,d\sigma_y - \int_{\p\Gamma \cap B_{2|x|}} k^{(N)}(x,y) h(y) \,d\sigma_y 
	\\&\quad + 
	\int_{\p\Gamma \cap B_{2|x|^c}} ( k(x,y) - k^{(N)}(x,y) ) h(y) \,d\sigma_y + \int_{\p\Gamma \cap B_{2|x|}} k(x,y) h(y) \,d\sigma_y
	\\&=:
	II_1 + II_2 + II_3 + II_4,
\end{align*}
and further
\begin{align*}
	II_4
	&=
	\int_{\p\Gamma \cap (B_{2|x|}(x) \setminus B_{|x|/10}(x))} k(x,y) h(y) \,d\sigma_y + \int_{\p\Gamma \cap B_{|x|/10}(x)} k(x,y) h(y) \,d\sigma_y
	\\&=:
	II_{41} + II_{42}.
\end{align*}
The estimates for $II_1, II_2, II_3$, and $II_{41}$ are very similar to those of the corresponding terms in $I$. Actually, the estimates here are simpler since a point-wise bound \eqref{eqn-221229-1157} for $h$ is available, instead of merely an $L^2$ bound for $Du$ from Caccioppoli's inequality. For $II_1$, formally,
\begin{align*}
	II_1
	&=
	\int_{\p\Gamma} \sum_{j\leq N} b^{(j)} \left( \frac{y}{|y|} \right) |y|^{1-d-m_j} |x|{m_j} \psi_j \left( \frac{x}{|x|} \right) h(y) \,d\sigma_y
	\\&=
	\sum_{j\leq N} |x|^{m_j} \psi_j \left( \frac{x}{|x|} \right) \int_{\p\Gamma} b^{(j)} \left( \frac{y}{|y|} \right) |y|^{1-d-m_j} h(y) \,d\sigma_y,
\end{align*}
which is a combination of homogeneous solutions on $\Gamma$ with homogeneity at most $m_N$. Such formal computation is rigorous since all integrands are in $L^1$, which we check below. By \eqref{eqn-230315-1042} and \eqref{eqn-221229-1157},
\begin{align*}
	\int_{\p\Gamma} |b^{(j)}| |y|^{1-d-m_j} |h| \,d\sigma_y
	&\leq
	C \int_{\p\Gamma \cap B_R} |y|^{1-d-m_j} |y|^{\mu + \alpha\alpha_0} \,d\sigma_y
	\\&\leq
	C \int_0^R r^{1-d-m_j} r^{\mu + \alpha\alpha_0} r^{d-2} \,dr
	= 
	CR^{\mu - m_j + \alpha\alpha_0}.
\end{align*}
As explained when estimating $I_2$, similar computation yields $|II_{2}| \leq C|x|^{\mu + \alpha\alpha_0}$.

For $II_{3}$, we use \eqref{eqn-221221-0701} and \eqref{eqn-221229-1157} to obtain
\begin{align*}
	| II_{3} | 
	&\leq 
	C |x|^{m_{N+1}} \int_{\p\Gamma \cap B_{2|x|}^c} |y|^{1-d-m_{N+1}} |y|^{\mu + \alpha\alpha_0}\,d\sigma_y
	\\&\leq
	C |x|^{m_{N+1}} \int_{2|x|}^R r^{\mu - m_{N+1} + \alpha\alpha_0 - 1} \,dr
	\leq
	C |x|^{\mu + \alpha\alpha_0} + C |x|^{m_{N+1}}.
\end{align*}
For $II_{41}$, from \eqref{eqn-221227-0835}, \eqref{eqn-221229-1157}, and the fact that $|x|\approx |y|\approx |x-y|$ for any $y \in B_{2|x|}(x) \setminus B_{|x|/10}(x) )$, we obtain
\begin{align*}
	| II_{41} |
	\leq
	C \int_{\p\Gamma \cap ( B_{2|x|}(x) \setminus B_{|x|/10}(x) )} (|y|^{-1} + |x-y|^{-1}) |x-y|^{2-d} |y|^{\mu + \alpha\alpha_0} \, d\sigma_y
	\leq
	C |x|^{\mu + \alpha\alpha_0}.	
\end{align*}

Finally, we estimate $II_{42}$, which is different from estimating $I_{42}$. Here, using \eqref{eqn-221227-0816-1} instead of \eqref{eqn-221227-0835}, then noting \eqref{eqn-221229-1157} and the fact that $|y| \approx |x| \geq 10 |x-y|$ for $y\in B_{|x|/10}(x)$, we obtain
\begin{align*}
	| II_{42}|
	\leq
	C \int_{\p\Gamma \cap B_{|x|/10}(x) } \frac{\delta(x)}{|x-y|^d} |y|^{\mu + \alpha\alpha_0}\,d\sigma_y
	\approx
	C |x|^{\mu + \alpha\alpha_0} \int_{\p\Gamma \cap B_{|x|/10}(x) } \frac{\delta(x)}{|x-y|^d} \,d\sigma_y.
\end{align*}
Let $x^* \in \p\Gamma$ be the point with $|x-x^*| = \delta(x)$. Since $y \in \p\Gamma$, by definition and the triangle inequality,
\begin{equation*}
	|x-y| \geq \delta(x) (=\dist(x,\p\Gamma)), \quad |x^* - y| \leq |x-y| + \delta(x) \leq 2 |x-y|.
\end{equation*}
Hence,
\begin{align*}
	| II_{42} |
	&\leq
	C |x|^{\mu + \alpha\alpha_0} \int_{\p\Gamma \cap B_{|x|/10}(x) } \frac{\delta(x)}{\delta(x)^d + |x^* - y|^d} \,d\sigma_y
	\\&\leq
	C |x|^{\mu + \alpha\alpha_0} \int_{\bR^{d-1}} \frac{\delta(x)}{\delta(x)^d + |y'|^d}\,dy'
	=
	C |x|^{\mu + \alpha\alpha_0} \int_{\bR^{d-1}} \frac{1}{1 + |y'|^d}\,d y' \leq C |x|^{\mu + \alpha\alpha_0}. 
\end{align*}

\subsection{Concluding the proof of Proposition \ref{prop-221228-0952}} 

With all above, we have constructed $P = \sum_{j=1}^N C_j |x|^{m_j} \psi_j\left( x/|x| \right)$, such that
\begin{equation*}
	\left( \fint_{\Gamma \cap B_r} | u - P |^p  \right)^{1/p} \leq C r^{\min\{ \mu + \alpha_0 \min \{\alpha,\beta\}, m_{N+1} \} }.
\end{equation*}

When $\mu \in (m_N, m_{N+1})$, clearly we must have $P = 0$ in order to match the vanishing order. When $\mu = m_N$, i.e., $u(x) = O(|x|^{m_N})$, similarly $P$ cannot have any term with homogeneity lower than $m_N$. Hence, $P$ is homogeneous of degree $m_N$. Choosing $\psi_N$ from the eigenspace properly, we have $P(\widetilde{x}) = C |\widetilde{x}|^{m_N} \psi_N\left( \widetilde{x}/|\widetilde{x}| \right)$. This finishes the proof of Proposition \ref{prop-221228-0952}.

\section{Uniqueness and non-uniqueness of blow-ups} \label{sec-221223-0602}

As mentioned in the introduction, Theorem \ref{thm-230225-1127} implies the existence of Almgren blow-ups (along subsequences) for any non-trivial harmonic functions vanishing locally on the boundary near a conical point. When $d=2$, such limit is also unique.
\begin{proposition} 
	Let $\Omega \subset \bR^2$ and $0\in\p\Omega$ be a conical point with a tangent cone $\Gamma$. Suppose that $u$ is a nontrivial harmonic function vanishing locally on $\p\Omega$ near $0$, with $u = O(|x|^N)$ for some $N>0$. Then there exists a homogeneous harmonic function $P$ on $\Gamma$, vanishing on $\p\Gamma$, such that for $u_r$ defined in \eqref{eqn-230627-0304}, we have $u_r \rightarrow P$ weakly in $H^1(B_1)$ and strongly in $C^{0,\alpha_0}(B_1)$, where $\alpha_0$ is a constant depending only $\Gamma$.
\end{proposition}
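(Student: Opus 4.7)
The plan is to combine the asymptotic homogeneity from Theorem~\ref{thm-230225-1127} with the simplicity of Dirichlet eigenvalues on cross-sections in two dimensions, and then use a continuity/intermediate-value argument to rule out sign-flipping between subsequential blow-ups. Since $u = O(|x|^N)$ forces $\liminf_{r\to 0} N_u(r) < \infty$, Theorem~\ref{thm-230225-1127} gives $\lim_{r\to 0}N_u(r) = 4^{2m_k}$ for some characteristic constant $m_k = m_{k,\Gamma}$. Caccioppoli's inequality together with the uniform exterior density condition inherited by $\Omega/r$ from $\Gamma$ for small $r$ yield uniform $H^1(B_{1-\delta})$ and $C^{0,\alpha_0}(\overline{B_{1-\delta}})$ bounds for the zero extensions of $u_r$, for some $\alpha_0 = \alpha_0(\Gamma) > 0$ coming from De Giorgi. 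Along any $r_j\to 0$, a subsequence converges weakly in $H^1_{\mathrm{loc}}(B_1)$, strongly in $L^2(B_1)$, and locally uniformly to some $P$. This $P$ is harmonic in $\Gamma\cap B_1$, vanishes on $\partial\Gamma\cap B_1$, and has $N_P\equiv 4^{2m_k}$ (propagated from $u_{r_j}$ via strong $L^2$-convergence on thin shells together with $\Omega/r\to\Gamma$), so Lemma~\ref{lem-230225-1124} forces $P$ to be homogeneous of degree $m_k$. In $d=2$ the simplicity of the eigenvalues forces $P = c\,|x|^{m_k}\psi_k(x/|x|)$, and the $L^2$-normalization of $u_r$ pins $|c|=c_0$ to a common positive constant independent of the subsequence.

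To rule out sign-flipping, extend $\Psi_k(x) := |x|^{m_k}\psi_k(x/|x|)$ by zero outside $\Gamma$ and set
\begin{equation*}
\Phi(r) := \int_{B_1} u_r(y)\,\Psi_k(y)\,dy.
\end{equation*}
By a change of variables and homogeneity of $\Psi_k$, $\Phi(r)$ factors as $r^{-m_k-d/2}\int_{B_r} u\,\Psi_k$ divided by the normalizing factor in \eqref{eqn-230627-0304}; the numerator is continuous in $r$ by absolute continuity of volume integrals, and for the denominator it is cleanest to pass to the comparable volume-averaged version $(\fint_{B_r}|u|^2)^{1/2}$ (which by Lemma~\ref{lem-230624-1045} differs from the surface-averaged one by a bounded factor and produces the same subsequential limits up to a positive scalar), and which is absolutely continuous and strictly positive for all small $r$. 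Hence $\Phi$ is continuous on $(0,r_0)$, with accumulation points at $0$ lying in $\{\pm c_0\|\Psi_k\|_{L^2(B_1)}^2\}$, both nonzero. If both were attained, the intermediate value theorem would produce $s_j\to 0$ with $\Phi(s_j)=0$; extracting a further subsequence $u_{s_j}\to\pm c_0\Psi_k$ would give $\Phi(s_j)\to\pm c_0\|\Psi_k\|_{L^2}^2\neq 0$, a contradiction. So $\Phi$ has a unique limit and hence so does $u_r$.

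The main obstacle is precisely the continuity step: once the correct (volume-averaged) normalization is in place, Lebesgue differentiation handles the continuity and the intermediate-value contradiction closes cleanly. The upgrade from $L^2$-convergence to weak $H^1(B_1)$ and strong $C^{0,\alpha_0}(B_1)$ convergence is then immediate from the uniform bounds of the first step combined with uniqueness of the subsequential limit.
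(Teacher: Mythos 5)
Your approach matches the paper's sketch in its essential structure: finite doubling limit via Theorem~\ref{thm-230225-1127}, blow-up compactness, rigidity forcing subsequential limits to be homogeneous of a fixed degree $m_k$, simplicity of 2D Dirichlet eigenvalues restricting the limit set to $\{\pm c_0 \Psi_k\}$, and continuity in $r$ to rule out oscillation between the two signs. The only genuine difference is in how you formalize the last step: the paper simply asserts continuity of $r\mapsto u_r$ in $L^2(\partial B_1)$, while you reduce to the scalar $\Phi(r) = \int_{B_1} u_r \Psi_k$ and prove its continuity after switching to a volume-averaged normalization (which sidesteps any worries about trace regularity of $u$ on spheres). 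That switch is harmless because all subsequential limits share the same $L^2(B_1)$-norm, so the ratio of the two normalizations converges to a single positive constant; you should state this explicitly, as otherwise ``the same subsequential limits up to a positive scalar'' leaves open that the scalar could depend on the subsequence. (Minor: the exponent in your change-of-variables factor should be $r^{-m_k-d}$, not $r^{-m_k-d/2}$; this does not affect the continuity argument.)

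The one step I would push back on is the very first: you write that ``$u = O(|x|^N)$ forces $\liminf_{r\to 0} N_u(r) < \infty$,'' but as a logical implication this is false. The condition $|u(x)| \le C|x|^N$ is an \emph{upper} bound, and every solution of \eqref{eqn-230314-1018} already satisfies $u = O(|x|^{\alpha_0})$ for some $\alpha_0 > 0$ by De Giorgi; an upper bound on $u$ does nothing to prevent $u$ from vanishing to infinite order, which is exactly the case $N_u(r)\to\infty$. What the proposition actually needs is a \emph{lower} bound on the order of vanishing, i.e.\ that $u$ is not $o(|x|^N)$ for every $N$, or equivalently $\liminf N_u < \infty$ (as you correctly use thereafter). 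The hypothesis as worded in the proposition is almost certainly intended in that sense, so the rest of your argument is sound, but you should not present this implication as though it follows from the displayed big-$O$ condition; state the needed hypothesis ($\liminf N_u < \infty$, or a two-sided vanishing-order bound) explicitly and note that it is what makes the subsequential blow-up limits nontrivial.
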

As before, in the statement we do not distinguish $u$ and $P$ with their zero extensions.

	We give a sketch of the proof. First, as explained in the introduction, all subsequence limits of $u_r$ have to be $L^2$-normalized on $\p B_1$, lying in an eigenspace of $\lambda_N$ determined by $\lim_{r\rightarrow 0} N_u(r)$. Now from the fact that the eigenvalue $\lambda_N$ is simple, there exists an $L^2$-normalized eigenfunction $\psi_N$, such that all possible Almgren blow-ups along subsequences have to be either $+ \psi_N$ or $-\psi_N$. Noting that the full blowup sequence $u_r$ varies continuously with respect to $r$ in $L^2(\p B_1)$, the limit has to be unique.

When the dimension is three or more, higher eigenvalues need not be simple, so the argument fails. This suggests that Almgren blowup sequences might ``rotate'' within these eigenspaces, leading to non-unique limits. In this section, we confirm that this actually can happen by constructing the example promised in Theorem \ref{thm-230315-0259}.


\subsection{Setup}

We construct $\Omega$ by intersecting cones. For $k=1,2,\ldots$, let $\alpha_k,\beta_k \in (0,\pi/2)$ be numbers satisfying
\begin{equation} \label{eqn-230629-0921}
	\beta_1 < \alpha_1 < \cdots < \beta_k < \alpha_k < \cdots \rightarrow \pi/2.
\end{equation}
The values of $\alpha_k,\beta_k$ are not important. For instance, here we can fix $\alpha_k = \pi/2 - 2^{-(2k+2)}$ and $\beta_k = \pi/2 - 2^{-(2k+1)}$. Let
\begin{equation*}
	\Gamma_k 
	= 
	\left\{(x,y,z): z > \sqrt{\left( \frac{x}{\tan\beta_k} \right)^2 + \left( \frac{y}{\tan\alpha_k} \right)^2} \right\}.
\end{equation*}
be an elliptic cone with opening angles $2\alpha_k$ and $2\beta_k$. For a sequence of numbers with
\begin{equation*} 
	\{\delta_k\}_{k=1}^\infty \subset (0,1), \quad \delta_k \downarrow 0
\end{equation*}
to be chosen later, we set
\begin{equation*} 
	\Omega 
	=
	\bigcap_{k=1}^\infty (O_k \Gamma_k - \delta_k),
\end{equation*}
where
\begin{equation*}
	O_k = I_3 = \begin{pmatrix}
		1 & 0 & 0\\0 & 1 & 0\\0 & 0 & 1
	\end{pmatrix} \,\,\text{if $k$ odd},
	\quad
	O_k = O = \begin{pmatrix}
		0 & 1 & 0\\-1 & 0 & 0\\0 & 0 & 1
	\end{pmatrix}\,\,\text{if $k$ even}.
\end{equation*}
Here and throughout this section, we use the abbreviation
\begin{equation*}
	\Gamma_k - \delta_k = \Gamma_k - \delta_k (0,0,1) = \Gamma_k - \delta_k e_z
\end{equation*}
for cones shifted along the $z$ axis. Clearly, this $\Omega$ is convex with $0\in\p\Omega$, and is symmetric with respect to reflections about the $x$ and $y$ axes.

\begin{figure}[h]	
	\begin{tikzpicture}
		\draw [->] (0,-3.1) -- (0, 3.1) node (taxis) [right] {$z$};
		\draw [->] (-8.1,0) -- (8.1,0) node (xaxis) [above] {$x$};
		\draw (0,-3) -- (-6,3); \draw (0,-3) -- (6,3); \draw [domain=90:135] plot ({cos(\x)/2}, {sin(\x)/2 - 3});
		\node[right] at (0,-3) {$-\delta_1$}; \node[above] at (-0.2,-2.6) {$\beta_1$}; \node[left] at (6,3) {$\Gamma_1$};
		
		\draw (0,-1) -- (-8,3); \draw (0,-1) -- (8,3); \draw [domain=90:150] plot ({cos(\x)/3}, {sin(\x)/3 - 1});
		\node[right] at (0,-1) {$-\delta_2$}; \node[above] at (-0.2,-0.8) {$\alpha_2$}; \node[left] at (8,3.1) {$O_2\Gamma_2$};
		
		\draw (0,-0.2) -- (-6,1.3); \draw (0,-0.2) -- (6,1.3); \draw [domain=90:165] plot ({cos(\x)/3}, {sin(\x)/3 - 0.2});
		\node[right] at (0,-0.3) {$-\delta_3$}; \node[above] at (-0.2,0) {$\beta_3$}; \node[left] at (6,1.4) {$\Gamma_3$};
		%
		\fill [gray,opacity=0.3] (-6,3) -- (-4,1) -- (-3.2,0.6) -- (0,-0.2) -- (3.2,0.6) -- (4,1) -- (6,3);
	\end{tikzpicture}
\end{figure}

Let $\lambda_{x,k}$ and $\lambda_{y,k}$ be the first Dirichlet eigenvalues of the bisected cross-sections $\Gamma_k \cap \p B_1 \cap \{x>0\}$ and $\Gamma_k \cap \p B_1 \cap \{y>0\}$, respectively. Due to the eccentricity of the elliptical region $\Gamma_k \cap \p B_1$, we have $\lambda_{x,k} < \lambda_{y,k}$. In the next section, we sketch a proof of this fact via an elementary perturbation argument from a spherical cap. A more detailed exposition (in the case of ellipses in the plane, but the approach is the same) can be found in \cite{henry_2005}.

\subsection{The spectrum of perturbations of spherical caps} \label{sec-230627-0549}

\newcommand{\R}{\mathbb{R}}

Let $E_0 \subset S^{d-1}\subset \R^d$ be a spherical cap of the form $E_0 = \{x : |x| = 1, x_d > s\}$ for a fixed $s \in (-1, 1)$. We parametrize $S^{d-1}\setminus\{(0, \ldots, 0, 1)\}$ by $(\psi, \theta) \in [0, \pi) \times S^{d-2} = T_{(1, 0, \ldots, 0)}S^{d-1}$ via the exponential map, and write $g$ for the round metric. Let $\phi_t : S^{d-1} \times [0, T) \rightarrow S^{d-1}$ be a family of diffeomorphisms smooth in both parameters and with $\phi_0(x) = x$. Set $E_t = \phi_t(E_0)$.

Now consider the Dirichlet eigenvalues $\{\lambda_k(E_t)\}_{k = 1}^\infty$ of these domains, i.e. the nondecreasing sequences of numbers for which
\[
\begin{cases}
	- \Delta_{S^{d-1}} v_k = \lambda_k(E_t) v_k & \text{ on } E_t \\
	v_k = 0 & \text{ on } \partial E_t.
\end{cases}
\]
At $t = 0$, $\lambda_k(E_0)$ have a straightforward structure which may be verified by separation of variables: $\lambda_1(E_0)$ is simple, then $\lambda_2(E_0) = \ldots = \lambda_{d}(E_0) < \lambda_{d+1}(E_0)$ with an orthonormal (in $L^2(E_0)$) basis of $(d-1)$ eigenfunctions $\{v_i\}_{i = 1}^{d-1}$.

The eigenvalues $\{\lambda_k(E_t)\}_{k = 2}^d$ of $E_t$ form, for $t$ small, a union of $C^1$ curves of the following form: let
\[
m_{ij} = - \int_{\partial E_0} g(\nabla v_i, \nabla v_j) g(V, \nu) dA,
\]
where $A$ is the surface measure on $\partial E_0$ and $v_i$ are the basis of second eigenfunctions, $V = \partial_t \phi_t|_{t = 0}$ (this is a vector field), and $\nu$ is the outward unit normal vector to $E_0$. This is an $(d-1)$-dimensional symmetric matrix, with eigenvalues $\mu_1 \leq \ldots \leq \mu_{d-1}$. Then
\[
\lambda_k(E_t) = \lambda_2(E_0) + t \mu_k + O(t^2).
\]
This formula can be found in \cite{R} in the case of subsets $\R^d$, but remains valid over any Riemannian manifold by the same argument after a direct computation of the variation of the Dirichlet and volume integrals (\cite{HP} carries out such computations). As a consequence, if the numbers $\mu_k$ are all distinct, then there is a $t_0 > 0$ such that for $t \in (0, t_0)$ the eigenvalues $\lambda_2(E_t), \ldots, \lambda_n(E_t)$ are simple.

When $d = 3$, the eigenfunctions $v_1, v_2$, by separation of variables, are easily seen to be of the form $v_1(\psi, \theta) = q(\psi)\cos(\theta)$, $v_2(\psi, \theta) = q(\psi)\sin(\theta)$  (after a rotation), for a smooth function $q$ which is positive on $[0, \arccos s)$ and vanishes at $\psi = \arccos s$. This gives the explicit formula
\[
m = - (q')^2 \int \left(\begin{matrix}
	\cos^2 \theta & \cos \theta \sin \theta \\ \cos \theta \sin \theta & \sin^2 \theta
\end{matrix} \right) g(V, \nu) dA.
\]
Then $\mu_1 = \mu_2$ if and only if this matrix is a multiple of the identity, or equivalently if
\[
\int_{\partial E_0} \sin (2\theta) g(V, \nu) dA = 0 \qquad \text{ and } \qquad \int_{\partial E_0} \cos (2\theta) g(V, \nu) dA = 0.
\]
If $\phi_t(\phi, \theta, t) = (\phi(1 + h(\theta, t)), \theta)$ where $h(\theta, 0) = 0$, $h$ is even and $\pi$-periodic in $\theta$, and has $\p_t h(\cdot, 0)$ strictly decreasing on $[0, \pi/2]$, then the second integral is positive and $E_t$ has $\lambda_2(E_t) < \lambda_3(E_t)$ for $t \in (0, t_0)$. Moreover, the domains $E_t$ are symmetric across the planes $\theta = 0$ and $\theta = \pi/2$, so odd reflections of the first eigenfunctions of the bisected domains $E_t \cap \{ \theta \in (0, \pi) \}, E_t \cap \{\theta \in (-\pi/2, \pi/2) \}$ give eigenfunctions on $E_t$. As all the eigenvalues are continuous in $t$, these must be the second and third eigenfunctions: in particular, $\lambda_1(E_t \cap \{ \theta \in (0, \pi) \}) \neq \lambda_1(E_t \cap \{\theta \in (-\pi/2, \pi/2) \})$. A more careful examination of the matrix $m$ shows that in fact $\lambda_1(E_t \cap \{ \theta \in (0, \pi) \})$ is the larger of the two.

It is then easy to see that if given $E_0$ a hemisphere, one may construct a $\phi_t$ of this form for which each $E_t$ is the cross-section of an elliptic cone with opening angles $\alpha = \pi/2 - t$ and $\beta = \pi/2 - 2t$ (along the $\theta = 0$ and $\theta = \pi/2$ axes, respectively). We conclude that $\lambda_{x,k} < \lambda_{y,k}$ for the cones $\Gamma_k$ above so long as $\beta_k - \alpha_k$ is small enough.

\subsection{Back to Example}

The discussion in Section \ref{sec-230627-0549} shows that the second eigenvalue of the hemisphere $\p B_1 \cap \{(x,y,z): z>0\}$, which equals $6$ with a multiplicity of $2$, splits into two simple second and third eigenvalues on every $\Gamma_k \cap \p B_1$.
Furthermore, from $\lambda_{x,k} < \lambda_{y,k}$, we have the same order for their characteristic constants (see \eqref{eqn-221018-0432}), i.e., $m_{x,k} < m_{y,k}$. Recall that by separation of variables, $m_{x,k}$ and $m_{y,k}$ equal to the homogeneity of the corresponding extended harmonic functions on $\Gamma_k$.

Now find harmonic functions $u_1, u_2$ solving the Dirichlet problems
\begin{equation*}
	\begin{cases}
		\Delta u_1 = 0 \,\, \Omega \cap B_1 \cap \{x>0\},\\
		u_1 = 0 \,\, ( \p\Omega \cup \{x=0\} )\cap B_1,\\
		u_1 = 1 \,\, \p B_1 \cap (\Omega \cap \{x>0\})
	\end{cases}
	\,\,\text{and}\,\,
	\begin{cases}
		\Delta u_2 = 0 \,\, \Omega \cap B_1 \cap \{y>0\},\\
		u_2 = 0 \,\, ( \p\Omega \cup \{y=0\} )\cap B_1,\\
		u_2 = 1 \,\, \p B_1 \cap (\Omega \cap \{y>0\}).
	\end{cases}
\end{equation*}
These are in $W^{1,2}(B_t \cap \Omega)$ for $t < 1$. Take odd extensions of $u_1, u_2$ with respect to $x=0$ and $y=0$, respectively, and then extend both by zero outside $\Omega$. Still denote the resulting functions by $u_1, u_2$. It is not difficult to see that
\begin{equation*}
	N_{u_1}(r), N_{u_2}(r) \downarrow 4^{2\times 2} = 256, \quad \text{as}\,\,r\downarrow 0
\end{equation*}
and
\begin{equation*}
	\frac{ u_1( r \cdot ) }{ (\fint_{\p B_{r}} |u_1|^2)^{1/2} } 
	\rightarrow
	4\sqrt{2/\pi} xz,
	\quad
	\frac{ u_2( r \cdot ) }{ (\fint_{\p B_{r}} |u_2|^2)^{1/2} } 
	\rightarrow
	4\sqrt{2/\pi} yz,
	\quad
	\text{as}\,\,r\downarrow 0,
\end{equation*}
which are $L^2$-normalized eigenfunctions associated with the second eigenvalue $\lambda_2 = 6$ on the hemisphere $\p B_1 \cap \{(x,y,z): z>0\}$. In the following, we show that the desired ``rotation'' occurs for $u = u_1 + u_2$ by choosing $\delta_k$ properly. We consider auxiliary domains $\Omega_k, \widehat{\Omega}_{k}$ and auxiliary functions $u^{(k)}_1, u^{(k)}_2$.
Here,
\begin{equation*}
	\Omega_k := \cap_{j\leq k} (O_j\Gamma_j - \delta_j) \cap (O_{k+1}\Gamma_{k+1})
	\quad \text{and} \quad
	\widehat{\Omega}_{k} := \cap_{j \leq k} (O_j \Gamma_j - \delta_j).
\end{equation*}
It is not difficult to see that
\begin{equation} \label{eqn-230629-0821}
	\Omega_1 \subset \Omega_2 \subset \cdots \subset \Omega_k \subset \cdots\subset\Omega \subset \cdots\subset \widehat{\Omega}_k \subset \cdots \subset \widehat{\Omega}_2 \subset \widehat{\Omega}_1.
\end{equation}
The functions $u^{(k)}_1, u^{(k)}_2$ are solutions to
\begin{equation} \label{eqn-230629-1233}
	\begin{cases}
		\Delta u^{(k)}_1 = 0 \,\, \Omega_{k-1} \cap B_1 \cap \{x>0\},\\
		u^{(k)}_1 = 0 \,\, ( \p \Omega_{k-1} \cup \{x=0\} )\cap B_1,\\
		u^{(k)}_1 = 1 \,\, \p B_1 \cap (\Omega_{k-1} \cap \{x>0\}),
	\end{cases}
	\text{and}\,\,
	\begin{cases}
		\Delta u^{(k)}_2 = 0 \,\, \widehat{\Omega}_{k} \cap B_1 \cap \{y>0\},\\
		u^{(k)}_2 = 0 \,\, ( \p \widehat{\Omega}_{k} \cup \{y=0\} )\cap B_1,\\
		u^{(k)}_2 = 1 \,\, \p B_1 \cap (\widehat{\Omega}_{k} \cap \{y>0\}),
	\end{cases} 
	\text{if} \,\, k \,\,\text{odd},
\end{equation}
or,
\begin{equation} \label{eqn-230629-0859}
	\begin{cases}
		\Delta u^{(k)}_1 = 0 \,\, \widehat{\Omega}_{k} \cap B_1 \cap \{x>0\},\\
		u^{(k)}_1 = 0 \,\, ( \p \widehat{\Omega}_{k} \cup \{x=0\} )\cap B_1,\\
		u^{(k)}_1 = 1 \,\, \p B_1 \cap (\widehat{\Omega}_{k} \cap \{x>0\}),
	\end{cases} 
	\text{and}\,\,
	\begin{cases}
		\Delta u^{(k)}_2 = 0 \,\, \Omega_{k-1} \cap B_1 \cap \{y>0\},\\
		u^{(k)}_2 = 0 \,\, ( \p \Omega_{k-1} \cup \{y=0\} )\cap B_1,\\
		u^{(k)}_2 = 1 \,\, \p B_1 \cap (\Omega_{k-1} \cap \{y>0\}),
	\end{cases} \text{if} \,\, k \,\,\text{even}.
\end{equation}
Like $u_1, u_2$, in the following we first take odd extensions of $u^{(k)}_1, u^{(k)}_2$ with respect to $x=0$ and $y=0$, respectively, and then extend both by zero outside. Still denote the resulting functions by $u^{(k)}_1, u^{(k)}_2$. 

Below, we attempt to choose a sequence of $\delta_k$ decreasing to zero, such that, due to the alternation of $m_{x,k}$ and $m_{y,k}$,
	\begin{equation} \label{eqn-230628-1205}
	\frac{ \fint_{\p B_{\delta_k}} |u^{(k)}_1|^2 }{ \fint_{\p B_{\delta_k}} |u^{(k)}_2|^2 } > k
	\,\,\text{if}\,\,k\,\,\text{odd and}\,\,
	\frac{ \fint_{\p B_{\delta_k}} |u^{(k)}_1|^2 }{ \fint_{\p B_{\delta_k}} |u^{(k)}_2|^2 } < \frac{1}{k}		\,\,\text{if}\,\,k\,\,\text{even}.
\end{equation}
In Section \ref{sec-230627-1150} we show that \eqref{eqn-230628-1205} together with a comparison, which comes from \eqref{eqn-230629-0821}, imply that the desired ``rotation'' occurs.

\subsection{Choosing \texorpdfstring{$\delta_k$}{the radii}}

\hfill\\ 
\textbf{Starting point $k=1$:} 
Recall that $u^{(1)}_1, u^{(1)}_2$ solve \eqref{eqn-230629-1233} with $\Omega_0 = \Gamma_1$ and $\widehat{\Omega}_1 = \Gamma_1 - \delta_1$. Denote $\psi_{x,1}$ to be the (odd extension with respect to $x=0$ of) $L^2$-normalized leading Dirichlet eigenfunction on $\p B_1 \cap \Gamma_1 \cap \{x>0\}$. From the orthogonality properties of the spherical harmonics, it is easy to see that
\begin{equation} \label{eqn-230131-0656-1}
	\left( \fint_{\p B_r \cap \Gamma_1} |u^{(1)}_1|^2 \right)^{1/2} 
	\geq
	r^{m_{x,1}} \left( \fint_{\p B_1 \cap \Gamma_1} u^{(1)}_1\psi_{x,1} \right)^{1/2} 
	\geq
	C^{-1} r^{m_{x,1}} , \quad \forall r<1.
\end{equation}
For $u^{(1)}_2$, by Almgren's monotonicity formula on the convex domain (actually, cone) $\widehat{\Omega}_1$, centered at $-\delta_1 e_z \in \p \widehat{\Omega}_1$, and the fact that the lowest non-orthogonal mode of $u^{(1)}_2$ is $\lambda_{y,1}$, we have
\begin{equation} \label{eqn-230629-1240}
	\frac{\fint_{\p B_{r}( - \delta_1 e_z)} |u_2^{(1)}|^2}{\fint_{\p B_{r/4}( - \delta_1 e_z)} |u_2^{(1)}|^2} = N_{u^{(1)}_2}(r) \geq N_{u^{(1)}_2}(0) = 4^{2 m_{y,1}}, \quad \forall r\in (0,1-\delta_1).
\end{equation}
Here and also later in this section, we abuse the notation $N_{u^{(1)}_2}(r)$ which includes a shift of the center.
Iterating \eqref{eqn-230629-1240}, we obtain, for all $r < 3/4$ and $\delta_1 < 1/4$ (away from $1$ is enough),
\begin{equation*}
	\left( \fint_{\p B_r(-\delta_1 e_z)} |u^{(1)}_2|^2 \right)^{1/2}
	\leq 
	C r^{m_{y,1}} \left( \fint_{\p B_{3/4} (-\delta_1 e_z)} |u^{(1)}_2|^2 \right)^{1/2}
	\leq
	C r^{m_{y,1}} .
\end{equation*}
By the subharmonicity of $|u_2^{(1)}|^2$ and the mean value property, for $r \in (\delta_1/2, 3/16)$ and $\delta_1 < 1/4$,
\begin{equation} \label{eqn-230131-0656-2}
	\begin{split}
		\left( \fint_{\p B_r} |u^{(1)}_2|^2 \right)^{1/2} 
		&\leq
		C \left( \fint_{B_{2r}} |u^{(1)}_2|^2 \right)^{1/2} 
		\\&\leq 
		C \left( \fint_{B_{4r}(-\delta_1 e_z) } |u^{(1)}_2|^2 \right)^{1/2} 
		\leq
		C \left( \fint_{\p B_{4r}(-\delta_1 e_z) } |u^{(1)}_2|^2 \right)^{1/2} 
		\leq
		C r^{m_{y,1}}.
	\end{split}
\end{equation}
Here $C$ is a constant independent of $r$ and $\delta_1$.

Combining \eqref{eqn-230131-0656-1} - \eqref{eqn-230131-0656-2} with $r = \delta_1$,
\begin{equation*}
	\frac{ \fint_{\p B_{\delta_1}} |u^{(1)}_1|^2 }{ \fint_{\p B_{\delta_1}} |u^{(1)}_2|^2 }
	\geq 
	C^{-1} \delta_1^{2(m_{x,1} - m_{y,1})}, \quad \forall \delta_1 < 1/4.
\end{equation*}
Now, noting $m_{x,1} < m_{y,1}$, we are able to choose $\delta_1$ small enough, such that $C^{-1} \delta_1^{2(m_{x,1} - m_{y,1})} > 1$.

\noindent\textbf{Given $\{\delta_j\}_{j\leq k-1}$, choose $\delta_{k}$.} By switching the roles of $x$ and $y$, without loss of generality, we can always assume that $k$ is an even number.

Recall that $u^{(k)}_1, u^{(k)}_2$ solve \eqref{eqn-230629-0859}.
The estimate of $u^{(k)}_1$ is similar to that of $u^{(1)}_2$, noting that $\widehat{\Omega}_k$ is still convex. By Almgren's monotonicity formula centered at $-\delta_{k} e_z \in \p\widehat{\Omega}_k$,
\begin{equation*}
	\frac{\fint_{\p B_{r}( - \delta_k e_z)} |u^{(k)}_1|^2}{\fint_{\p B_{r/4}( - \delta_{k} e_z)} |u^{(k)}_1|^2} 
	= 
	N_{u^{(k)}_1} (r) 
	\geq 
	N_{u^{(k)}_1}(0) 
	=
	4^{2 m_{y,k}}, \quad \forall r < 1 - \delta_k.
\end{equation*}
Here, in the last step, we used the fact that the lowest non-orthogonal mode of $u^{(k)}_1$ is $\lambda_{y,k}$, noting that the cone $O_{k}\Gamma_{k}$ was rotated by $90$ degree in $(x,y)$. Hence, following the proof of \eqref{eqn-230131-0656-2}, we obtain, whenever $\delta_k < \delta_1 < 1/4$,
\begin{equation} \label{eqn-230201-1230-1}
	\left( \fint_{\p B_r} |u^{(k)}_1|^2 \right)^{1/2}
	\leq
	C r^{m_{y,k}}, \quad \forall r\in (\delta_{k}/2 , 3/16 ).
\end{equation}
Compared to $u^{(1)}_1$, the estimate of $u^{(k)}_2$ requires some extra work since now $\Omega_{k-1}$ is not exactly a cone.
Define
\begin{equation*}
	R_k := \sup \{ r: B_r \cap \p\Omega_{k-1} \subset \p (O_{k} \Gamma_{k}) \}.
\end{equation*}
From the monotonicity of the cones coming from \eqref{eqn-230629-0921}, we have $R_k>0$. Clearly, $R_k$ depends on $\{\delta_j\}$ up to $j \leq k-1$. 
As in the proof of \eqref{eqn-230131-0656-1}, denote $\psi_{x,k}$ to be the (odd extension of) $L^2$-normalized leading Dirichlet eigenfunction on $\p B_1 \cap \Gamma_{k} \cap \{x>0\}$. Note that $O_k$ is a $90$ degree rotation in $(x,y)$ and the symmetry, the projection of $u^{(k)}_2$ onto $\psi_{x,k}$ is non-trivial, from which
\begin{equation} \label{eqn-230201-1230-2}
	\begin{split}
		\left( \fint_{\p B_r} |u^{(k)}_2|^2 \right)^{1/2}
		&=
		\left( \fint_{\p B_r \cap O_k\Gamma_k} |u^{(k)}_2|^2 \right)^{1/2}
		\\&\geq
		(\frac{r}{R_k})^{m_{x,k}} \left( \fint_{\p B_{R_k} \cap O_k\Gamma_k} u^{(k)}_2 \psi_{x,k} \right)^{1/2} 
		\geq
		C (\frac{r}{R_l})^{m_{x,k}}, \quad \forall r<R_l,
	\end{split}
\end{equation}
where $C$ is a constant depending on $\{\delta_j\}_{j\leq k-1}$. Here we used the fact that
\begin{equation*}
	\p B_r \cap \p\Omega_{l-1} \subset \p O_{l} \Gamma_{l}, \quad \forall r<R_l
\end{equation*}
coming from our definition of $R_l$.

Combining \eqref{eqn-230201-1230-1}-\eqref{eqn-230201-1230-2} and the fact that $m_{x,l+1} < m_{y,l+1}$, we can choose $\delta_k$ small enough, such that $\delta_k < R_k$ and
\begin{equation*}
	\frac{ \fint_{B_{\delta_k}} |u^{(k)}_1|^2 }{ \fint_{\p B_{\delta_k}} |u^{(k)}_2|^2 }
	\leq
	C R_k^{ - 2m_{x,k}} \delta_{k}^{2(m_{y,k} - m_{x,k})} 
	< \frac{1}{k}.
\end{equation*}
With all above, we have finished our choice of $\{\delta_k\}_{k=1}^\infty$.
%

\subsection{Conclusion of the proof of Theorem \ref{thm-230315-0259}} \label{sec-230627-1150}

Since $\Omega_{2k} \subset \Omega \subset \widehat{\Omega}_{2k+1}$, by comparison principle we have $|u_1| \geq |u^{(2k+1)}_1|$ and $|u_2| \leq |u^{(2k+1)}_2|$. Hence,
\begin{equation*}
	\limsup_{r\rightarrow \infty} \frac{ \fint_{\p B_r} |u_1|^2 }{\fint_{\p B_r} |u_2|^2}
	\geq
	\lim_{k\rightarrow \infty} \frac{ \fint_{\p B_{\delta_{2k+1}}} |u^{(2k+1)}_1|^2 }{\fint_{\p B_{\delta_{2k+1}}} |u^{(2k+1)}_2|^2} 
	\geq
	\lim_{k\rightarrow \infty} k
	= \infty.
\end{equation*}
Similarly,
\begin{equation*}
	\liminf_{r\rightarrow \infty} \frac{ \fint_{\p B_r} |u_1|^2 }{\fint_{\p B_r} |u_2|^2} 
	\leq
	\lim_{k\rightarrow \infty} \frac{ \fint_{\p B_{\delta_{2k}}} |u^{(2k)}_1|^2 }{\fint_{\p B_{\delta_{2k}}} |u^{(2k)}_2|^2} 
	\leq
	\lim_{k\rightarrow \infty} \frac{1}{k}
	= 0.
\end{equation*}
Then in strong $L^2(B_1)$ topology, $u=u_1 + u_2$ satisfies
\begin{equation*}
	\lim_{k\rightarrow \infty} \frac{ u( \delta_{2k+1} \cdot ) }{ (\fint_{\p B_{\delta_{2k+1}}} |u|^2)^{1/2} } 
	= 
	\lim_{k \rightarrow \infty} \frac{ u_1( \delta_{2k+1} \cdot ) }{ (\fint_{\p B_{\delta_{2k+1}}} |u_1|^2)^{1/2} } 
	=
	4\sqrt{2/\pi} xz
\end{equation*}
and
\begin{equation*}
	\lim_{k\rightarrow \infty} \frac{ u(\delta_{2k}\cdot) }{ (\fint_{\p B_{\delta_{2k}}} |u|^2)^{1/2} } 
	=
	\lim_{k\rightarrow \infty} \frac{ u_2(\delta_{2k}\cdot) }{ (\fint_{\p B_{\delta_{2k}}} |u_2|^2)^{1/2} } 
	=
	4\sqrt{2/\pi} yz,
\end{equation*}
which are second eigenfunctions on a hemisphere, symmetric with respect to $x$ and $y$ axis, respectively. Hence, $u$ has non-unique blowup limits near the origin. This finishes the proof of Theorem \ref{thm-230315-0259}.

%
%

\appendix
\section{Proof of gradient estimates in Lemma \ref{lem-221227-0652}}\label{sec-230315-0811}

The proof of \eqref{eqn-221227-0835} is standard. Take $R:= \frac{1}{2} \min \{|y| , |x-y|\}$. Since $0 \not \in B_{R}(y)$ and $x \notin B_{R}(y)$, we apply a local Lipschitz estimate for harmonic functions in $B_{R}(y) \cap \Gamma$ and the point-wise bound \eqref{eqn-221218-1113} to obtain
\begin{equation*}
	|\nabla_y G(x,y)| \leq C R^{-1} \sup_{z\in B_R (y) \cap \Gamma} |G(x,z)| \leq C R^{-1} |x-y|^{2-d},
\end{equation*}
which proves \eqref{eqn-221227-0835}. Here, Assumption \ref{ass-230119-0339} implies the same smoothness of $\p\Gamma \cap B_R$, and hence a local Lipschitz estimate for harmonic functions with zero Dirichlet boundary conditions. When $g$ in Assumption \ref{ass-230119-0339} is $C^{1,Dini}$, such estimate is standard. When it is semiconvex, a local Lipschitz estimate can be obtained by constructing a simple barrier coming with the exterior ball.

The proof of \eqref{eqn-221227-0816-1} requires some more work. First, by \eqref{eqn-221227-0835} and the fact that $G(x,y) = 0$ for $y \in\p\Gamma$, we immediate obtain $G(x,y) \leq C \delta(y) (|y|^{-1} + |x-y|^{-1}) |x-y|^{2-d}$. By symmetry, we also have
\begin{equation} \label{eqn-230701-0716}
	G(x,y) \leq C \delta(x) (|x|^{-1} + |x-y|^{-1}) |x-y|^{2-d}.
\end{equation}
Now, repeating the argument in the proof of \eqref{eqn-221227-0835} but using the new point-wise bound \eqref{eqn-230701-0716} instead of \eqref{eqn-221218-1113}, we reach \eqref{eqn-221227-0816-1}. The lemma is proved.

\section{Doubling indices and Almgren's frequencies on cones} \label{sec-230225-1124}
In this section, we prove Lemma \ref{lem-230225-1124}. Define the (generalized) Almgren's frequency functions
\begin{equation*}
	F(r) := \frac{r \int_{B_r} |\nabla u|^2 }{\int_{\p B_r} |u|^2}, \quad \widetilde{F}(r) := \frac{\int_{B_r} |\nabla u|^2(r^2 - |x|^2) }{\int_{B_r} |u|^2}.
\end{equation*}
By standard computation, we have
\begin{equation} \label{eqn-230227-0744-1}
	F'(r) = \frac{2r}{h^2} \left( \left(\int_{\p B_r} |u|^2\right) \left(\int_{\p B_r \cap \Gamma} (\nu\cdot \nabla u)^2\right) - \left(\int_{\p B_r} u \left( \nu \cdot \nabla \right) u\right)^2 \right)
\end{equation}
and
\begin{equation*} 
	\widetilde{F}'(r) = \frac{4}{r\widetilde{h}^2} \left( \left(\int_{B_r} |u|^2\right) \left(\int_{B_r\cap\Gamma} |x\cdot\nabla u|^2\right) - \left(\int_{B_r \cap \Gamma} u \left(x \cdot \nabla \right) u\right)^2 \right).
\end{equation*}

See for instance, \cite{MR1363203}. Note that here all contributions from $\p\Gamma$ vanish since $\Gamma$ is a cone. Moreover, by standard elliptic regularity theory we have $Du|_{\p\Omega} \in L^2$, which is understood in the sense of non-tangential limit. This guarantees that all the integration by parts in the process are justified. Now we give the proof of Lemma \ref{lem-230225-1124}.
\begin{proof}[Proof of Lemma \ref{lem-230225-1124}]
		Noting
	\begin{equation*}
		F = \frac{rD(r)}{h(r)} = \frac{r}{2} \frac{d}{dr} \log h - \frac{d-1}{2},
		\quad
		\widetilde{F} = \frac{\widetilde{D}(r)}{\widetilde{h}(r)} = r \frac{d}{dr} \log \widetilde{h} - d,
	\end{equation*}
	we have
	\begin{align*}
		N_u(4^t) 
		&= 
		\frac{ \fint_{\p B_{4^t}} |u|^2 }{ \fint_{\p B_{4^{t-1}}} |u|^2 } 
		= 
		4^{d-1} \frac{ h(4^t) }{ h(4^{t-1}) }
		=
		4^{d-1} e^{\int_{t-1}^t \frac{d}{ds}\log h(4^s) \,ds }
		\\&=
		4^{d-1} e^{\log(4) \int_{t-1}^t (r \log(h)')|_{r=4^s} \,ds }
		=
		16^{d-1} 16^{\int_{t-1}^t F(4^s) \,ds}
	\end{align*}
	and
	\begin{align*}
		\widetilde{N}_u(2^t)
		&=
		\frac{ \fint_{B_{2^t}} |u|^2 }{ \fint_{B_{2^{t-1}}} |u|^2 }
		=
		2^d \frac{ \widetilde{h} (2^t) }{ \widetilde{h} (2^{t-1}) }
		=
		2^d e^{\int_{t-1}^t \frac{d}{ds} \log \widetilde{h}(2^s) \,ds}
		\\&=
		2^d e^{\log(2) \int_{t-1}^t (r \log (\widetilde{h})')|_{r=2^s} \,ds}
		=
		4^d 2^{\int_{t-1}^t \widetilde{F}(2^s) \, ds}.
	\end{align*}
	From these, in the following we only prove the monotonicity and rigidity of $F$ and $\widetilde{F}$ since those for $N,\widetilde{N}$ naturally follow.	
	Moreover, we only prove for $F$ as the proof for $\widetilde{F}$ is almost identical. First, from \eqref{eqn-230227-0744-1} and H\"older's inequality, clearly $F' \geq 0$. Now, if $F(t)=F(s)$ for some $t>s$, by the condition for achieving ``$=$'' in H\"older's inequality,  $\frac{\p u}{\p r} = C_r u$ for all $r\in (s, t)$. Expanding as spherical harmonics, clearly this can only be true when $u$ is homogeneous in $r$. Once $u$ is homogeneous in $r$, we immediately have $F\equiv $ constant and $u$ is a homogeneous harmonic function.
\end{proof}

\bibliographystyle{plain}

\bibliography{biblio}

%

\end{document}